\newtheorem{thm}{Theorem}[section]
\newtheorem{con}[thm]{Conjecture}
\newtheorem{lem}[thm]{Lemma}
\numberwithin{equation}{section}
\newcommand{\bQ}{{\mathbb{Q}}}
\newcommand{\R}{{\mathbb{R}}}
\newcommand{\bZ}{{\mathbb{Z}}}
\newcommand{\bP}{{\mathbb{P}}}
  \newcommand{\E}{{\mathcal{E}}}
  \newcommand{\M}{{\mathcal{M}}}
\renewcommand{\P}{{\mathcal{P}}}
  \newcommand{\V}{{\mathcal{V}}}
\newcommand{\rank}{\operatorname{rank}}
\newcommand{\inc}{\delta}
\newcommand{\sm}{\setminus}
\newcommand{\dl}{{[d-2]}}
\newcommand{\dlm}{{[d-1]}}
\newcommand{\dmt}{{[d-t]}}
\newcommand{\kl}{{[k]}}
\newcommand{\one}{{[1]}}
\tikzset{nodeblack/.style={circle,draw=black,fill=black!30,inner sep=1.2pt}}
\tikzset{every loop/.style={}}
\begin{document}
%%%%%%%%%%%%%%%%%%%%%%%%%%%%%%%%%%%%%%%%%%%%%%%%%%%%%%%%%%%%%%%%%%%%%%%%%%%%%%%%%%%%%%%%%%%%%%%%%%%%%%%%%%%%%%%%%%%%%%%%%%%%%%%%%%%%%%%%%%%%%%%%%%%%%%

\title{Rigidity of linearly-constrained frameworks}

\author[James Cruickshank]{James Cruickshank}
\address{School of Mathematics,  Statistics and Applied Mathematics, National University of
Ireland, Galway, Ireland}
\email{james.cruickshank@nuigalway.ie}
\author[Hakan Guler]{Hakan Guler}
\address{School of Mathematical Sciences, Queen Mary
University of London, Mile End Road, London E1 4NS, U.K. }
\email{hakanguler19@gmail.com}
\author[Bill Jackson]{Bill Jackson}
\address{School of Mathematical Sciences, Queen Mary
University of London, Mile End Road, London E1 4NS, U.K. }
\email{b.jackson@qmul.ac.uk}
\author[Anthony Nixon]{Anthony Nixon}
\address{Department of Mathematics and Statistics\\ Lancaster University\\
LA1 4YF \\ U.K. }
\email{a.nixon@lancaster.ac.uk}
\date{\today}

\begin{abstract}
We consider the problem of characterising the generic rigidity  of bar-joint frameworks in $\R^d$ in which each vertex is constrained to lie in a given affine subspace. The special case when $d=2$ was previously solved by I. Streinu and L. Theran in 2010. We will extend their characterisation to the case when $d\geq 3$ and each vertex is constrained to lie in an affine subspace of dimension $t$, when $t=1,2$ and also when $t\geq 3$ and $d\geq t(t-1)$.
 We then point out that results on body-bar frameworks obtained by N. Katoh and S. Tanigawa in 2013 can be used to characterise when a graph has a rigid realisation as a $d$-dimensional body-bar framework with a given set of linear constraints.
\end{abstract}

\keywords{rigidity,  linearly-constrained framework, pinned framework, slider joint, framework on a surface}
\subjclass[2010]{52C25, 05C10 \and 53A05}

\maketitle

\section{Introduction}\label{introduction}

A (bar-joint) framework $(G,p)$ in $\mathbb{R}^d$ is  a
finite graph $G=(V,E)$ together with a  realisation $p:V\rightarrow
\mathbb{R}^d$. The framework $(G,p)$ is \emph{rigid} if every edge-length
preserving continuous motion of the vertices arises as a congruence
of $\mathbb{R}^d$.

It is NP-hard to determine whether a given framework is rigid \cite{Abb}, but this problem becomes more tractable for generic frameworks. It is known that the rigidity of a generic framework $(G,p)$ in $\mathbb{R}^d$ depends only on the underlying graph $G$, see \cite{AR}. We say that  {\em $G$ is rigid in $\mathbb{R}^d$} if some/every generic realisation of $G$ in $\mathbb{R}^d$ is rigid.   Combinatorial characterisations of generic rigidity in $\mathbb{R}^d$ have been obtained when $d\leq 2$, see \cite{laman}, and these characterisations give rise to efficient combinatorial algorithms to decide if a given graph is rigid. In higher dimensions, however, no combinatorial characterisation or algorithm is yet known.

Motivated by numerous potential applications, notably in mechanical engineering, rigidity has also been considered for frameworks with various kinds of pinning constraints \cite{EJNSTW,KatTan,ShaiServWh,ST, Tetal}. Most relevant to this paper is
the work of Streinu and Theran  \cite{ST} on slider-pinning, which we describe below.

Throughout this paper we will use the term {\em graph} to describe a graph which may contain
multiple edges and loops and denote such a graph by $G=(V,E,L)$ where $V,E,L$ are the sets of vertices, edges and loops, respectively. We will use the terms {\em simple graph} to describe a graph which contains neither multiple edges nor loops, and {\em looped simple graph} to describe a graph which contains no multiple edges but may contain loops.

A {\em linearly-constrained framework in $\R^d$}  is a triple $(G,
p, q)$ where $G=(V,E,L)$ is a graph, $p:V\to \R^d$ and
$q:L\to \R^d$. For $v_i\in V$ and $e_j\in L$ we put $p(v_i)=p_i$ and
$q(e_j)=q_j$.
It is {\em generic} if $(p, q)$ is algebraically independent over
$\mathbb{Q}$.

An {\em infinitesimal motion} of $(G, p, q)$ is a map $\dot
p:V\to \R^d$ satisfying the system of linear equations:
\begin{eqnarray}
\label{eqn1} (p_i-p_j)\cdot (\dot p_i-\dot p_j)&=&0 \mbox{ for all $v_iv_j \in E$}\\
\label{eqn2} q_j\cdot \dot p_i&=&0 \mbox{ for all incident pairs $v_i\in V$ and
$e_j \in L$.}
\end{eqnarray}
The second constraint implies that  the infinitesimal velocity of each
$v_i\in V$ is constrained to lie on the hyperplane through $p_i$ with normal $q_j$
for each loop $e_j$ incident to $v_i$.

The {\em rigidity matrix $R (G, p, q)$} of the linearly-constrained framework $(G, p, q)$ is  the
matrix of coefficients of this system of equations for the unknowns
$\dot p$. Thus $R (G, p, q)$ is a $(|E|+|L|)\times d|V|$ matrix, in
which: the row indexed  by an edge $v_iv_j\in E$ has $p(u)-p(v)$ and
$p(v)-p(u)$ in the $d$ columns indexed by $v_i$ and $v_j$,
respectively and zeros elsewhere; the row indexed  by a loop
$e_j=v_iv_i\in L$ has $q_j$  in the $d$ columns indexed by $v_i$ and
zeros elsewhere. The $|E|\times d|V|$ sub-matrix consisting of the rows indexed by $E$ is the {\em bar-joint rigidity matrix} $R(G-L,p)$ of the bar-joint framework $(G-L,p)$.

The framework $(G, p, q)$ is {\em infinitesimally rigid} if its only
infinitesimal motion is $\dot p =0$, or equivalently if $\rank R(G,
p, q) = d|V|$. We say that the graph $G$ is  {\em rigid} in $\R^d$
if $\rank R(G, p, q) = d|V|$ for some realisation $(G,p, q)$ in
$\R^d$, or equivalently if $\rank R(G, p, q) = d|V|$ for all {\em
generic} realisations $(G,p,q)$ i.e.\  all realisations for which
$(p,q)$ is algebraically independent over $\bQ$.

Streinu and Theran \cite{ST} characterised the graphs
$G$ which are rigid in $\mathbb{R}^2$.
We need to introduce some terminology to describe their result. Let
$G=(V,E,L)$ be a graph. For $F\subseteq E\cup L$, let $V_F$ denote the set of vertices incident to $F$.

\begin{thm}\label{thm:ST}
A  graph can be realised as an infinitesimally rigid
linearly-constrained framework in $\R^2$ if and
only if it has a spanning subgraph $G=(V,E,L)$ such that
$|E|+|L|=2|V|$, $|F|\leq 2|V_F|$ for all $F\subseteq E\cup L$ and $|F|\leq 2|V_F|-3$ for all $\emptyset\neq F\subseteq E$.
%$$i_E(X)+\max\{3,i_L(X)\}\leq 2|X|.$$
\end{thm}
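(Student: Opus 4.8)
The plan is to prove the two implications separately. The forward (necessity) direction follows immediately from the structure of the rigidity matrix $R(G,p,q)$, while the converse (sufficiency) carries essentially all the work and I would establish it by an inductive construction.

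For necessity, suppose $(G,p,q)$ is infinitesimally rigid, so $\rank R(G,p,q)=2|V|$. Choosing a maximal independent set of rows yields a spanning subgraph $H=(V,E',L')$ with $|E'|+|L'|=2|V|$ whose rows remain independent, so it suffices to verify the two inequalities for such an independent set. The rows indexed by a subset $F\subseteq E\cup L$ have nonzero entries only in the $2|V_F|$ columns indexed by $V_F$, so at most $2|V_F|$ of them can be independent, giving $|F|\leq 2|V_F|$. If in addition $F\subseteq E$, then these rows are rows of the bar-joint rigidity matrix $R(G-L,p)$ supported on $V_F$; in $\R^2$ the trivial infinitesimal motions (two translations and one rotation) span a space of dimension three, so this submatrix has rank at most $2|V_F|-3$, whence independence forces $|F|\leq 2|V_F|-3$. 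This gives the three stated conditions.

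For sufficiency it is enough to show that every graph $H=(V,E,L)$ satisfying the three conditions is generically infinitesimally rigid: realising such an $H$ as a spanning subgraph of $G$ then exhibits $2|V|$ independent rows of $R(G,p,q)$, forcing $\rank R(G,p,q)=2|V|$. I would prove this by induction on $|V|$ via Henneberg-type operations. The base case is the single vertex carrying two loops, which is pinned to a point by its two generic loop constraints and hence is infinitesimally rigid. For the inductive step I would use two reduction moves, each lowering $|E|+|L|$ by two: a $0$-reduction deleting a vertex incident to exactly two elements of $E\cup L$ (two loops, a loop and an edge, or two edges to distinct vertices), and a $1$-reduction deleting a vertex incident to exactly three elements and reinserting a single well-chosen edge among its neighbours. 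A degree count --- the conditions force $L\neq\emptyset$ and preclude vertices incident to fewer than two elements, so the average number of incidences per vertex is strictly less than four --- produces a vertex of degree two or three to which such a move applies. One then verifies that the move can be performed so that the smaller graph still satisfies the three conditions, applies the inductive hypothesis, and shows that the inverse $0$- or $1$-extension preserves generic infinitesimal rigidity; the $0$-extension is immediate since the new vertex's two local constraints are generically independent, while the $1$-extension requires the standard argument that splitting an edge preserves generic independence in the plane.

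The main obstacle is the sufficiency direction, and within it the $1$-reduction. On the combinatorial side one must prove that at a degree-three vertex some admissible edge can be reinserted without violating either count; this is the familiar difficulty handled by analysing the maximal tight (``blocking'') subsets through the submodularity of the function $F\mapsto 2|V_F|$ and showing that two such subsets cannot simultaneously obstruct every choice. The presence of loops makes the casework genuinely more delicate than in the pure bar-joint setting: the two counts $2|V_F|$ and $2|V_F|-3$ must be tracked separately under every move, and degree-three vertices incident to loops force variant reductions (for instance transferring a loop to a neighbour) that have no analogue in the loop-free theory. On the geometric side, establishing that the $1$-extension preserves generic independence in $\R^2$ requires a genericity argument --- exhibiting one non-degenerate placement of the split vertex and invoking that independence of the rigidity matrix is a Zariski-open condition. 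An alternative to the whole induction would be to identify the count matroid with a matroid union and quote a known coincidence of the rigidity and count matroids, but the inductive route seems the most self-contained.
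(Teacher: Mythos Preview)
Your proposal is correct and follows essentially the same inductive route that the paper sketches in its concluding remarks (the theorem itself is quoted from Streinu--Theran, whose original proof is a direct matroid-union argument via a frame matroid rather than an induction).  Both you and the paper find a vertex $v$ with $\inc(v)\in\{2,3\}$, handle $\inc(v)=2$ by a $0$-extension (the paper's Lemma~\ref{lem:02ext}), and for $\inc(v)=3$ split into the case where $v$ carries a loop (your ``transferring a loop to a neighbour'' is exactly the paper's Lemmas~\ref{lem:reduction} and~\ref{lem:extension}) and the case of three edges.

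The one place where the arguments diverge slightly is the three-edge $1$-reduction: you propose a direct blocking-set analysis via submodularity of $F\mapsto 2|V_F|$, tracking the two counts separately, whereas the paper packages the two counts into a single crossing-submodular rank function $f(F)=2|V_F|-3$ or $2|V_F|$ according as $F\subseteq E$ or not, observes that $E\cup L$ then carries a matroid, and dispatches the $1$-reduction by the one-line observation that $K_4$ is dependent in this matroid.  The matroid route is cleaner and avoids the case analysis you anticipate; your approach would also work but is more laborious.  Your final remark about the matroid-union alternative is in fact closer to Streinu and Theran's own proof.
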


The results of this paper extend Theorem \ref{thm:ST} to $\R^d$
%under the conditions that each vertex of $G$ is incident to at least $t$ loops, $t\geq 1$ and $d\geq %\max\{2t,t(t-1)\}$.
under the assumption that the number of linear constraints at each vertex is sufficiently large compared to $d$.
We need some further definitions to state our main results.
A graph $G=(V,E,L)$ is \emph{$(k,\ell)$-sparse} if $|F|\leq k|V_F|-\ell$
holds for all $\emptyset\neq F\subseteq E\cup L$. The graph $G$
 is \emph{$(k,\ell)$-tight} if it is  $(k,\ell)$-sparse and $|E|+|L|=k|V|-\ell$.
We use $G^\kl$ to denote the graph obtained from $G$ by adding $k$ loops to each vertex of $G$.

\begin{thm}\label{thm:main}
Suppose $G$ is a  graph and $d,t$ are positive integers with $d\geq \max\{2t,t(t-1)\}$. Then $G^\dmt$ can be realised as an infinitesimally rigid
linearly-constrained framework in $\R^d$ if and
only if $G$ has a $(t,0)$-tight looped simple spanning  subgraph.
\end{thm}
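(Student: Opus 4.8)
The plan is to recast the statement as a rank condition on the rigidity matrix and to split it into an easy (necessity) and a hard (sufficiency) direction. First I would record the basic count: if $H=(V,E,L)$ is $(t,0)$-tight and looped simple, then $R(H^{\dmt},p,q)$ has exactly $|E|+|L|+(d-t)|V|=t|V|+(d-t)|V|=d|V|$ rows and $d|V|$ columns, so infinitesimal rigidity of $H^{\dmt}$ is \emph{equivalent} to nonsingularity of this square matrix. Since adjoining rows cannot decrease rank, and $H^{\dmt}$ is a spanning subgraph of $G^{\dmt}$ whenever $H$ is a spanning subgraph of $G$, it suffices to prove two things: (a) if $G^{\dmt}$ is rigid then $G$ contains a $(t,0)$-tight looped simple spanning subgraph, and (b) if $H$ is $(t,0)$-tight and looped simple then $H^{\dmt}$ is rigid. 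The hypothesis $d\ge\max\{2t,t(t-1)\}$ will be needed only for (b).

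\textbf{Necessity.} The $(d-t)|V|$ rows coming from the added loops are generically independent, as each such row is supported on the $d$ columns of a single vertex and the normals are generic. Assuming $\rank R(G^{\dmt},p,q)=d|V|$, I would extend this independent set to a row basis by adjoining $t|V|$ rows indexed by elements of $E\cup L$, and let $H$ be the spanning subgraph of $G$ they determine, so that $|E(H)|+|L(H)|=t|V|$. For any $\emptyset\ne F\subseteq E(H)\cup L(H)$, the rows of $F$ together with the added loops at $V_F$ are independent and supported entirely on the $d|V_F|$ columns of $V_F$; hence $|F|+(d-t)|V_F|\le d|V_F|$, that is $|F|\le t|V_F|$. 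Thus $H$ is $(t,0)$-tight. Finally, two parallel edges produce identical rows, so no basis contains both, and $H$ is looped simple.

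\textbf{Sufficiency.} This is the crux. Because the added loops are generic, their rows restrict each velocity $\dot p_i$ to a generic $t$-dimensional subspace $T_i=\{x\in\R^d:q\cdot x=0\text{ for every added normal }q\text{ at }i\}$. Choosing a basis $u_{i,1},\dots,u_{i,t}$ of each $T_i$ reduces nonsingularity of $R(H^{\dmt},p,q)$ to nonsingularity of a square $t|V|\times t|V|$ \emph{reduced matrix} $M$, whose rows are indexed by $E(H)\cup L(H)$ and whose columns are the pairs $(i,a)$: a loop at $i$ contributes the entries $q\cdot u_{i,a}$ in the columns of $i$, and an edge $ij$ contributes $(p_i-p_j)\cdot u_{i,a}$ and $-(p_i-p_j)\cdot u_{j,b}$ in the columns of $i$ and $j$. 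I would then invoke the standard fact that a looped simple graph is $(t,0)$-tight if and only if it admits an orientation in which every vertex has out-degree exactly $t$ (equivalently, it decomposes into $t$ spanning pseudoforests). Orienting $H$ this way and assigning each element to a column of its tail yields a bijection between rows and columns, hence a distinguished term of $\det M$; the goal is to show $\det M\not\equiv0$ as a polynomial in $(p,q)$, which I would do by exhibiting one configuration of flats and points for which $M$ becomes block-triangular along the chosen orientation with invertible diagonal blocks.

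The main obstacle is exactly this non-vanishing of $\det M$, and it is here that the dimension bounds enter. The bound $d\ge 2t$ guarantees that the flats $T_i,T_j$ of two adjacent vertices meet only at the origin, so that the coupling between the columns of $i$ and $j$ along an edge is nondegenerate; the bound $d\ge t(t-1)$ provides enough room in $\R^d$ to place all of the flats incident to a common vertex in sufficiently general position, which is what makes the required transversality, and hence the survival of the distinguished term, hold. Below these thresholds one expects to be able to force the generic reduced matrix to be singular, so the hypotheses are genuinely used. An alternative route to (b) is a Henneberg-type induction: realise every $(t,0)$-tight looped simple graph from a single vertex carrying $t$ loops by a sequence of extension moves, verify directly that the base case and the $0$-extension give rigid frameworks by a generic-independence argument, and check that the remaining, more delicate moves preserve rigidity under the same two dimension bounds.
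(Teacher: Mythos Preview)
Your necessity argument is correct and essentially identical to the paper's Lemma~\ref{lem:nec}.

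For sufficiency, however, your primary route has a genuine gap. You promise to exhibit a configuration of $(p,q)$ for which the reduced $t|V|\times t|V|$ matrix $M$ becomes ``block-triangular along the chosen orientation with invertible diagonal blocks,'' but you do not construct it, and it is far from clear that one exists. Block-triangularity along an out-degree-$t$ orientation would require, for every oriented edge $i\to j$, that the backward entries $(p_i-p_j)\cdot u_{j,b}$ vanish, i.e.\ $p_i-p_j\in T_j^\perp$; since the orientation need not be acyclic, these affine constraints on the $p_i$ are coupled around cycles and may have no solution keeping the diagonal blocks invertible. Your account of where the dimension hypotheses enter is also too loose to be a proof: the assertion that $d\ge t(t-1)$ ``provides enough room to place all of the flats incident to a common vertex in general position'' does not point to any concrete step, and the degree of a vertex in a $(t,0)$-tight graph is not bounded above, so a naive count over neighbours does not yield $t(t-1)$.

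The paper proves sufficiency by the Henneberg-type induction you mention only as an alternative, and the two bounds enter at precise places. One picks a vertex $v$ with $\inc(v)+\inc^*(v)\le 2t$ (averaging), reduces $G$ to a smaller $(t,0)$-tight graph $H$ by deleting $v$ and adding $k=\inc(v)-t$ new loops at distinct neighbours (a combinatorial lemma using submodularity), and then reverses this with a $d$-dimensional $k$-loop extension. The extension lemma needs each affected neighbour in $H^{\dmt}$ to carry at least $\lceil (k-1)d/k\rceil$ loops; since it carries $d-t+1$, the inequality $d-t+1\ge\lceil (t-1)d/t\rceil$ is exactly what forces $d\ge t(t-1)$, while $d\ge 2t$ is used to ensure $k\le t\le d$ and that $v$ has at least $k$ loops in $G^{\dmt}$. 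If you want to pursue the determinant route, you would need to replace the hand-wave with either a genuine specialization argument or a monomial-uniqueness argument, and in either case identify \emph{where} and \emph{why} $d\ge t(t-1)$ is needed --- noting, incidentally, that the paper leaves open whether that bound is an artifact of the inductive method.
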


The complete simple graph $K_{2t+1}$ shows that the condition $d\geq 2t$ in Theorem \ref{thm:main} is, in some sense, best possible: $K_{2t+1}$ is $(t,0)$-tight and $K_{2t+1}^\dmt$ does not have an infinitesimally rigid realisation in $\R^d$ when $d=2t-1$. This follows from the fact that the rows of the bar-joint rigidity matrix of any realisation of $K_{2t+1}$ in $\R^{2t-1}$ are dependent. Hence the rank of the rigidity matrix of any realisation of $K_{2t+1}^{[t-1]}$ in $\R^{2t-1}$ will be at most $|E(K_{2t+1}^{[t-1]})|-1=(2t-1)|V(K_{2t+1}^{[t-1]})|-1$. We do not know whether the conclusion of Theorem \ref{thm:main}  still holds if the condition $d\geq t(t-1)$ is removed - it is conceivable that this condition is an artifact of our proof technique.

Linearly-constrained frameworks naturally arise when considering the infinitesimal rigidity of a bar-joint framework $(G,p)$ in $\R^d$ under the additional constraint that the vertices of $G$  lie on a smooth algebraic variety $\V$. We can model this situation as a linearly-constrained framework $(G^\dmt,p,q)$ in which $t=\dim \V$ and $q$ is chosen so that the image of the loops of $G^\dmt$ at each vertex $v$ span the orthogonal complement of the tangent space of $\V$ at $p(v)$.
 In this context, the continuous isometries
of $\V$ will always induce infinitesimal motions of $(G,p)$. We say
that {\em $(G,p)$ is infinitesimally rigid on $\V$} if these are the
only infinitesimal motions of $(G,p)$. Equivalently $(G,p)$ is
infinitesimally rigid on $\V$ if $\rank R(G^\dmt, p, q) = d|V|-\alpha$
where $\alpha$ is the {\em type} of $\V$ i.e.\  the dimension of the space
of infinitesimal isometries of $\V$.
The special case when $\V$ is an irreducible surface
in $\R^3$ was previously studied
by Nixon,
Owen and Power \cite{NOPa,NOP}.
They characterised generic rigidity for all such surfaces of types 3, 2 and 1.

\begin{thm}\label{thm:type1}
Let $G=(V,E)$ be a simple graph and let $\M$  be an irreducible
surface in $\R^3$ of type $\alpha\in \{1,2,3\}$. Then a generic framework
$(G,p)$ on $\M$ is infinitesimally rigid on $\M$ if and only if $G$
has a $(2,\alpha)$-tight spanning subgraph.
\end{thm}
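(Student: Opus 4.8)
The plan is to reduce the statement to a rank computation for the linearly-constrained rigidity matrix $R(G^{[1]},p,q)$, where the single loop at each vertex $v$ carries the unit normal $q(v)$ to $\M$ at $p(v)$, as described in the paragraph preceding the theorem. By definition $(G,p)$ is infinitesimally rigid on $\M$ exactly when $\rank R(G^{[1]},p,q)=3|V|-\alpha$. The two directions then become: (necessity) rigidity forces a $(2,\alpha)$-tight spanning subgraph, and (sufficiency) the existence of such a subgraph forces full rank at a generic point of $\M$.

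For \emph{necessity} I would first establish the upper bound $\rank R(G^{[1]},p,q)\le 3|V|-\alpha$ by exhibiting the $\alpha$-dimensional space of infinitesimal isometries of $\M$ inside the kernel of $R$, and by noting that the $|V|$ loop rows (one normal direction per vertex) are generically independent and span a $|V|$-dimensional subspace. Working in the quotient by the loop rows, a Maxwell-type argument shows that any $F\subseteq E$ whose edge rows are independent modulo the loops satisfies $|F|\le 2|V_F|-\alpha$: the sub-framework on $V_F$ inherits the $\alpha$-dimensional space of surface isometries, so its rows span at most $3|V_F|-\alpha$ dimensions, of which $|V_F|$ come from loops. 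Hence independent edge-sets are precisely the $(2,\alpha)$-sparse ones, and a rigid framework, having $\rank=3|V|-\alpha$, must contain $2|V|-\alpha$ independent edges spanning $V$; these form a $(2,\alpha)$-tight spanning subgraph.

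For \emph{sufficiency} the core is to show that generic rigidity on $\M$ defines a matroid on the edge set and that $(2,\alpha)$-tightness implies maximal rank. I would argue by induction using a recursive characterisation of $(2,\alpha)$-tight graphs: every such graph is built from a small list of base graphs by a sequence of moves, namely the $0$-extension (adding a degree-two vertex), the $1$-extension (an edge split), and—crucially for the lower types—at least one further operation such as a vertex split or an $X$-replacement. For each move one proves a preservation lemma: if the smaller graph is generically rigid on $\M$ then so is the larger one. The $0$-extension is immediate, since a new vertex moving in its two-dimensional tangent plane is pinned by its two incident edges together with its loop. The sphere case $\alpha=3$ can be handled separately, since rigidity on the sphere coincides with generic rigidity in the plane and $(2,3)$-tight is exactly Laman, so \cite{laman} applies directly.

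The hard part will be the surface-specific preservation lemmas for the non-trivial operations, particularly in the types $\alpha=1$ and $\alpha=2$. Because the loop data $q(v)$ are algebraic functions of $p(v)$ rather than free parameters, one cannot invoke the generic theory of free linearly-constrained frameworks (Theorem~\ref{thm:ST}); instead the standard limit/rank argument for the $1$-extension must be carried out for a specialisation of points constrained to lie on $\M$, and one must verify that the relevant maximal sub-determinant remains nonzero there. Identifying the correct generating set of operations for each class of $(2,\alpha)$-tight graphs, and checking that each operation maintains full rank on the cylinder and on surfaces of revolution, is the essential difficulty; this is where the geometry of $\M$ enters irreducibly and where the argument must be tailored to each type $\alpha$.
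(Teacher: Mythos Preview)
The paper does not contain a proof of this theorem: it is quoted as a known result of Nixon, Owen and Power \cite{NOPa,NOP} and then \emph{used} (in the proof of Lemma~\ref{lem:4regind}) rather than established. So there is no ``paper's own proof'' to compare against.

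That said, your outline is broadly the strategy actually used in \cite{NOPa,NOP}. The necessity direction is indeed a Maxwell-type count exactly as you describe. For sufficiency those papers proceed, as you suggest, by a recursive construction of $(2,\alpha)$-tight graphs together with preservation lemmas on $\M$: the case $\alpha=3$ reduces to Laman's theorem via the cone/projection correspondence with the plane; the cylinder case $\alpha=2$ in \cite{NOPa} uses $0$- and $1$-extensions plus additional moves; and the type~$1$ case in \cite{NOP} requires a substantially larger repertoire of operations (including vertex-to-$K_4$ and vertex-to-$4$-cycle moves, and edge-joining of tight pieces) together with delicate geometric arguments to show each preserves independence on the surface. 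Your remark that the loop data $q(v)$ are algebraic functions of $p(v)$, so that one cannot fall back on the free linearly-constrained theory, correctly identifies where the real work lies; but be aware that the list of operations needed for $\alpha=1$ is considerably longer than ``vertex split or $X$-replacement'', and verifying preservation for each is the bulk of \cite{NOP}.
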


Theorem \ref{thm:main} characterises the graphs $G$ which can be realised as an infinitesimally rigid linearly constrained framework on some type 0, $t$-dimensional variety
in $\R^d$, whenever $d\geq \max\{2t,t(t-1)\}$. The special case when $t=d-2$ and $d\geq 4$ characterises the graphs which have  an infinitesimally rigid realisation as a linearly-constrained framework on some type 0 surface in $\R^d$. A characterisation for the case when $d=2$ is given by Theorem \ref{thm:ST}. Our next result gives a solution to the remaining case when $d=3$.

\begin{thm}\label{thm:plane}
Suppose $G$ is a graph. Then $G^\one$ can be realised as an infinitesimally rigid
linearly-constrained framework in $\R^3$ if and
only if $G$ has a $(2,0)$-tight looped simple spanning  subgraph which contains no copy of $K_5$.
\end{thm}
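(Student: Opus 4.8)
The plan is to establish both implications, handling the \emph{only if} direction first since it is the more routine, and then devoting the main effort to a recursive construction for the \emph{if} direction. Throughout set $V=V(G)$ and work with generic $(p,q)$, so that infinitesimal rigidity of $(G^{[1]},p,q)$ is equivalent to $\rank R(G^{[1]},p,q)=3|V|$.

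\emph{Necessity.} Suppose $\rank R(G^{[1]},p,q)=3|V|$. The $|V|$ rows indexed by the added loops have pairwise disjoint column supports, so they are independent, and I would extend them to a basis $B$ of the row space. Then $B$ consists of these $|V|$ added-loop rows together with $2|V|$ rows indexed by a set $E(H)\cup L(H)\subseteq E(G)\cup L(G)$; write $H=(V,E(H),L(H))$. For any nonempty $F\subseteq E(H)\cup L(H)$, the rows of $F$ together with the $|V_F|$ added loops at the vertices of $V_F$ are a subset of $B$, hence independent, and they lie in the $3|V_F|$ columns indexed by $V_F$; this gives $|F|+|V_F|\le 3|V_F|$, i.e. $|F|\le 2|V_F|$. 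Thus $H$ is $(2,0)$-sparse, and since $|E(H)|+|L(H)|=2|V|$ it is $(2,0)$-tight; as $B$ has no repeated rows, $H$ is looped simple. Finally, if $H$ contained a copy of $K_5$ then all ten of its edge-rows would lie in $B$, yet the bar-joint rows of $K_5$ have rank at most $3\cdot 5-6=9$ in $\R^3$ and so are dependent, contradicting the independence of $B$. Hence $H$ is the required subgraph of $G$.

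\emph{Sufficiency.} For the converse I would prove that if $H$ is $(2,0)$-tight, looped simple and $K_5$-free then $(H^{[1]},p,q)$ is infinitesimally rigid for generic $(p,q)$; the theorem follows because $H\subseteq G$ lets us realise $R(H^{[1]})$ as a subset of the rows of $R(G^{[1]})$, so $\rank R(G^{[1]})\ge 3|V|$ and hence equals $3|V|$. I would induct on $|V|$, taking the one-vertex graph (two loops, which become three generic hyperplane constraints after the $[1]$-operation and pin the vertex) and the graphs on at most five vertices as base cases, verifying the latter directly with $K_5$-freeness excluding the single offending configuration. For the inductive step I would use inverse extension moves classified by the incidence count $c(v)=(\text{loops of }H\text{ at }v)+(\text{edges at }v)$: a vertex with $c(v)=2$ admits a $0$-reduction (delete $v$), a vertex with $c(v)=3$ a $1$-reduction (delete $v$, add back one edge or loop among its neighbours), and a vertex with $c(v)=4$ a $2$-reduction (delete $v$, add back two loops). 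In each case the reduced graph $H'$ is again $(2,0)$-tight and looped simple by an easy count, and I would prove a geometric preservation lemma: in $H^{[1]}$ the vertex $v$ carries its added loop, so a $c(v)=2$ vertex has exactly three independent constraints and is pinned given that $(H'^{[1]},\cdot)$ is rigid, while the $1$- and $2$-reductions are treated by the usual perturbation argument, choosing the generic realisation so that the rows deleted by the reduction are recovered by the new bars and loop at $v$.

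The main obstacle is the $2$-reduction, which is forced precisely by graphs all of whose vertices have $c(v)=4$, for example the octahedron $K_{2,2,2}$. Two difficulties arise together. Combinatorially, I must show that a $(2,0)$-tight, looped simple, $K_5$-free graph with minimum incidence count $4$ still admits a $2$-reduction that keeps the graph $(2,0)$-tight and $K_5$-free: one cannot in general re-establish the count by adding two edges among the four neighbours of $v$, since on the five surviving vertices this can create a $K_5$ (as it does for the octahedron), so the reduction must instead add loops, and I would prove that the $K_5$-free hypothesis guarantees such an admissible choice exists. Geometrically, I must show that the inverse operation --- deleting two loops and introducing a new vertex joined by four bars and one added loop --- preserves generic full rank in the critical dimension $d=3$; this is exactly the point at which the hypothesis $d\ge 2t$ of Theorem \ref{thm:main} is unavailable, and I expect it to require the full strength of $K_5$-freeness, since $K_5^{[1]}$ is the configuration for which the analogous extension fails in $\R^3$. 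Proving this preservation lemma, together with the combinatorial guarantee that some admissible reduction always exists, is the crux of the argument.
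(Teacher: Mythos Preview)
Your necessity argument is correct and matches the paper. Your treatment of the cases $\inc(v)\in\{2,3\}$ is also essentially the paper's: reduce via Lemma~\ref{lem:reduction} (always adding \emph{loops}, which automatically keeps $H$ $K_5$-free) and extend via Lemma~\ref{lem:extension}.

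The gap is in the $4$-regular simple case, and it is a real one rather than just ``the hard part left for later''. Your proposed inverse $2$-reduction would, in $G^{[1]}$, add a vertex $v$ with four bars and a single loop. But the $d$-dimensional $k$-loop extension of Lemma~\ref{lem:extension} requires at least $k$ loops at $v$; here $k=2$ and $v$ has only one. Worse, the proof mechanism of that lemma cannot be salvaged: it needs the set $\{p(v)-p(u):u\in N_G(v)\}$ to be linearly independent, and with $|N_G(v)|=4>3=d$ this is impossible. So the ``usual perturbation argument'' you invoke does not go through, and there is no evident way to exploit $K_5$-freeness to repair it at the level of a single extension move.

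The paper does not attempt any inductive reduction in this case. Instead it proves directly (Lemma~\ref{lem:4regind}) that every connected $4$-regular simple graph $G\neq K_5$ has an infinitesimally rigid realisation of $G^{[1]}$ in $\R^3$, via an argument of a completely different flavour: realise $G$ on a family of concentric elliptical cylinders (type~$1$ surfaces), use Theorem~\ref{thm:type1} together with the fact that $E(G)$ is a circuit in the simple $(2,1)$-sparse matroid to get rank $3|V|-1$, then invoke the Jackson--Jord\'an result (Theorem~\ref{thm:maxmin}) that $K_5$-free $4$-regular graphs are $(3,6)$-sparse, hence bar-independent in $\R^3$. This forces the unique equilibrium stress to be nonzero on some loop $f$, and redirecting $q(f)$ along the cylinder axis kills the residual translation. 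The $K_5$-freeness enters through Theorem~\ref{thm:maxmin}, not through any extension lemma.
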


Theorem \ref{thm:plane} completes the characterisation of infinitesimal rigidity of generic {\em plane-constrained frameworks} i.e.\  frameworks in which each vertex is constrained to lie on a given plane in $\R^d$. In this context it is natural to consider the infinitesimal rigidity of {\em line-constrained frameworks} in $\R^d$. Given a graph  $G$ and an integer $d\geq 2$, Theorem \ref{thm:main} tells us that $G^\dlm$ can be realised as an infinitesimally rigid
linearly-constrained framework in $\R^d$ if and
only if $G$ has a $(1,0)$-tight looped simple spanning  subgraph.
We will prove a more general result. We consider the case when we are given a  graph $G$ and a map $q$ from the set of loops of $G^\dlm$ to $\R^d$ such that the image of the set of loops at each vertex of $G^\dlm$ spans a subspace of dimension at least $d-1$. We then characterise when there exists a map $p:V\to \R^d$ such that $(G,p,q)$ is infinitesimally rigid.

Our final results concern  {\em body-bar frameworks in $\R^d$} i.e.\  frameworks consisting of $d$-dimensional rigid bodies joined by rigid bars. Tay \cite{T} characterised when a graph has an infinitesimally rigid  realisation as a body-bar framework in $\R^d$. We point out that two results of Katoh and Tanigawa \cite{KatTan} immediately extend Tay's result to linearly-constrained body-bar frameworks in $\R^d$.

\section{Linearly-constrained frameworks}
 We will prove Theorem \ref{thm:main}. The necessity part of the theorem follows immediately from:

 \begin{lem}\label{lem:nec}
Suppose $G$ is a  graph. If $G^\dmt$ can be realised as an infinitesimally rigid
linearly-constrained framework in $\R^d$ then $G$ has a $(t,0)$-tight looped simple spanning  subgraph.
\end{lem}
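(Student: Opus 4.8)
The plan is to pass to a generic realisation and then read off the desired subgraph as a maximal independent set of rows of a projected rigidity matrix. Since necessity is the straightforward half of Theorem \ref{thm:main}, no serious obstacle arises; the only points requiring care are the genericity reduction and a rank-splitting identity. By the equivalence recorded in the introduction, if $G^\dmt$ has \emph{some} infinitesimally rigid realisation then $\rank R(G^\dmt,p,q)=d|V|$ for a \emph{generic} choice of $(p,q)$, because the generic rank is maximal (and bounded above by the number $d|V|$ of columns). So I would fix such a generic $(p,q)$ and write $R=R(G^\dmt,p,q)$, partitioning its rows into the $(d-t)|V|$ rows $A$ indexed by the loops added in forming $G^\dmt$ and the rows $B$ indexed by $E\cup L$. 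Genericity of $q$ guarantees that the $d-t\le d$ added normals at each vertex $v$ are linearly independent, so $A$ has full row rank $(d-t)|V|$, and at each $v$ these rows constrain $\dot p(v)$ to a generic $t$-dimensional subspace $U_v=\span(\text{added normals at }v)^{\perp}$.

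Next I would apply the rank-splitting identity for a matrix with a full-row-rank block. Writing $\pi$ for the quotient map of the dual space $(\R^{d|V|})^{*}$ modulo $\operatorname{rowspace}(A)$, one has $\rank R=\rank A+\dim\pi(\operatorname{rowspace} B)$, since $\operatorname{rowspace}(A)+\operatorname{rowspace}(B)=\operatorname{rowspace}(R)$. Because the rows of $A$ at each vertex span exactly the annihilator of $U_v$, the quotient $(\R^{d|V|})^{*}/\operatorname{rowspace}(A)$ is canonically $\bigoplus_{v\in V}U_v^{*}$, of dimension $t|V|$, and $\pi$ sends each edge/loop row to its restriction to $\bigoplus_{v}U_v$. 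Letting $R'$ denote the resulting $(|E|+|L|)\times t|V|$ matrix of restricted rows, the rigidity condition $\rank R=d|V|$ becomes $\rank A+\rank R'=(d-t)|V|+\rank R'=d|V|$, that is, $\rank R'=t|V|$.

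Finally I would extract the subgraph. Choose $t|V|$ rows of $R'$ forming a basis of its row space; these index a spanning subgraph $H=(V,E_H,L_H)$ of $G$ with $E_H\subseteq E$, $L_H\subseteq L$ and $|E_H|+|L_H|=t|V|$. For any nonempty $F\subseteq E_H\cup L_H$ the corresponding rows of $R'$ are linearly independent and supported on the coordinates indexed by $\{U_v: v\in V_F\}$, which number $t|V_F|$; hence $|F|\le t|V_F|$ and $H$ is $(t,0)$-tight. Moreover any two parallel edges of $G$ give identical rows of $R$, and therefore of $R'$, so no independent set of rows contains a repeated edge; thus $H$ has no multiple edges and is looped simple. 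This produces the required $(t,0)$-tight looped simple spanning subgraph of $G$, completing the proof.
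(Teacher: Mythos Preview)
Your proof is correct and follows essentially the same approach as the paper's. Both arguments pass to a generic realisation, use that the $(d-t)|V|$ rows coming from the added loops are independent, extend these to a row basis of $R(G^{[d-t]},p,q)$, and read off the subgraph $H$ from the remaining $t|V|$ rows; the sparsity of $H$ then follows from a column count, and the absence of multiple edges from the fact that parallel edges give equal rows. The only cosmetic difference is that you phrase the extension step via the quotient matrix $R'$, whereas the paper works directly in the full matrix and argues that any violating subgraph $F$ of $H$ would force $R(F^{[d-t]},p,q)$ to have more rows than its $d|V_F|$ columns allow.
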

 \begin{proof}
 We may suppose that $(G^{[d-t]},p,q)$ is an infinitesimally rigid  generic realisation of $G^{[d-t]}$ in $\R^d$. Let $S$ be a set of loops of $G^{[d-t]}$ consisting of exactly $d-t$ loops at each vertex. It is not difficult to see that the rows of $R(G^{[d-t]},p,q)$ labeled by $S$ are linearly independent and hence we can choose a spanning subgraph $H=(V,E_H,L_H)$ of $G$ such that the rows of $R(H^{[d-t]},p,q|_{H^{[d-t]}})$ are linearly independent and $\rank R(H^{[d-t]},p,q|_{H^{[d-t]}})=d|V|$. The linear independence of the rows of $R(H^{[d-t]},p,q|_{H^{[d-t]}})$ immediately implies that $H$ has no multiple edges.
 If $H$ had a subgraph $F=(V_F,E_F,L_F)$ with $|E_F|+|L_F|>t|V_F|$ then we would have $\rank R(F^{[d-t]},p,q|_{F^{[d-t]}})\leq d|V_F|<|E_{F^{[d-t]}}|+|L_{F^{[d-t]}}|$. This would contradict the fact that the rows of $R(H^{[d-t]},p,q|_{H^{[d-t]}})$ are linearly independent. Hence $H$ is $(t,0)$-tight.
\end{proof}

 Our proof of sufficiency is inductive and is based on a reduction lemma which reduces a $(t,0)$-tight graph to a smaller such graph and an extension lemma which extends a graph which has a generically infinitesimally rigid realisation in $\R^d$ to a larger such graph.

\subsection*{Reduction operation}
 We shall need the following well known result on tight subgraphs of a sparse graph $G=(V,E,L)$. It follows easily from the fact that the function $F\mapsto |V_F|$ for $F\subseteq E\cup L$ is submodular.

 \begin{lem}
    \label{lem:unions}
    Suppose that \( H,K \) are \( (t,0) \)-tight subgraphs of
    a \( (t,0) \)-sparse graph.
    Then \( H \cup K \) and $H\cap K$ are also \( (t,0) \)-tight.
    \qed
\end{lem}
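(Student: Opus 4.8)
The plan is to prove that unions and intersections of $(t,0)$-tight subgraphs of a $(t,0)$-sparse graph are themselves $(t,0)$-tight, using submodularity of the vertex-count function as the paper suggests.

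Let me set up the proof. Let $H = (V_H, E_H, L_H)$ and $K = (V_K, E_K, L_K)$ be the two tight subgraphs. I write $f(F) = |F|$ for the edge-plus-loop count of a subgraph on edge set $F \subseteq E \cup L$, and $g(F) = |V_F|$. The key is that $g$ is submodular: $g(H \cup K) + g(H \cap K) \le g(H) + g(K)$, and that $f$ is modular (it simply counts edges and loops, so $f(H \cup K) + f(H \cap K) = f(H) + f(K)$ exactly).

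First I would record the tightness hypotheses as $f(H) = t\,g(H)$ and $f(K) = t\,g(K)$, and the sparsity of the ambient graph as $f(F) \le t\,g(F)$ for every nonempty $F$. Then I would chain these together. Adding the two tightness equalities and using modularity of $f$ gives
\begin{equation*}
f(H \cup K) + f(H \cap K) = f(H) + f(K) = t\,g(H) + t\,g(K) \ge t\,g(H \cup K) + t\,g(H \cap K),
\end{equation*}
where the final inequality is submodularity of $g$. On the other hand, sparsity of the ambient graph applied to $H \cup K$ and to $H \cap K$ gives $f(H \cup K) \le t\,g(H \cup K)$ and $f(H \cap K) \le t\,g(H \cap K)$. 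Comparing the displayed inequality with these two upper bounds forces equality throughout, so $f(H \cup K) = t\,g(H \cup K)$ and $f(H \cap K) = t\,g(H \cap K)$; combined with the sparsity inherited from the ambient graph on every subset, this is exactly the assertion that both $H \cup K$ and $H \cap K$ are $(t,0)$-tight.

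The one point requiring a little care — and the place I expect the only genuine subtlety — is the behaviour of $g$ on the intersection. In a graph with loops, $V_{H \cap K}$ (vertices incident to edges/loops lying in both $H$ and $K$) can be smaller than $V_H \cap V_K$, so one must verify that the submodular inequality is being applied with the correct convention for the vertex set of an intersection of edge sets; equivalently, one checks $|V_{H \cup K}| + |V_{H \cap K}| \le |V_H| + |V_K|$ directly from the standard fact that $F \mapsto |V_F|$ is submodular on the subsets of $E \cup L$. One should also note the edge case where $H \cap K$ has empty edge set, in which case it is vacuously tight; but since the strict form of the argument only uses nonempty subsets in the sparsity bound, this is handled by treating $g(\emptyset) = 0$. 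Beyond that, the argument is the standard "submodularity squeeze," so no further machinery is needed.
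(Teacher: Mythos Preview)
Your argument is correct and is precisely the submodularity squeeze the paper has in mind; the paper itself gives no proof beyond the remark that $F\mapsto |V_F|$ is submodular. One small addendum worth making explicit: your chain of inequalities also forces equality in the submodular step, which gives $|V_{E_{H\cap K}\cup L_{H\cap K}}|=|V_H\cap V_K|$ and so confirms that the graph-theoretic intersection has no isolated vertices and is tight in the paper's sense.
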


 Given a vertex $v$ in a graph $G$, we use $N_G(v)$ to denote the {\em neighbour set} of $v$ (i.e.\  the set of vertices of $G-v$ which are adjacent to $v$), $L_G(v)$ to denote the set of loops incident to $v$,  and $\inc_G(v)$ to denote the number of edges and loops of $G$ which are incident to $v$, counting each loop once. We will suppress the subscript `$G$' when it is obvious which graph we are referring to.

%Suppose that \( G \) is a \( (t,0) \)-tight looped simple graph.
%We write \( \mathcal N_G(v) \) or just \( \mathcal N(v) \)  for the set of
%neighbours of \( v \) that are different from \( v \). We write
%\( \mathcal L_G(v) \) or just \( \mathcal L(v) \)
%for the set of loops at \( v \), and define
%\( d^l(v) = |\mathcal L (v)|  \).
%In particular, since \( G \) is looped simple, \( d(v) := 2d^l(v) + |\mathcal N(v)| \)
%coincides with the standard meaning of the degree of \( v \) in a (multi)graph - the number of half %edges
%incident at \( v \).
%Let \( k = d(v) -d^l(v) -t\) and
%observe that \( k \geq 0 \) since \( G \) is \( (t,0) \)-tight.

\begin{lem}
    \label{lem:reduction}
 Suppose $t$ is a positive integer and $v$ is a vertex of a $(t,0)$-tight graph $G$. Let $k=\inc(v)-t$.   Then there are distinct vertices \( v_1,\ldots,v_k \in N(v) \)
    such that \( (G - v) \cup \{l_1,\ldots,l_k\} \) is
    \( (t,0) \)-tight, where \( l_i \) is a new loop added at \( v_i \) for all $1\leq i\leq k$.
\end{lem}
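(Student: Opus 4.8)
The plan is to work in the \((t,0)\)-count matroid \(\M\) on the vertex set \(V-v\), whose independent sets are exactly the \((t,0)\)-sparse edge-and-loop sets, and to produce the required loops by a single matroid augmentation. I would first record the easy count: since \(v\) is incident to \(\inc(v)=t+k\) edges and loops, the graph \(G-v\) has \(t|V|-(t+k)=t(|V|-1)-k\) edges and loops, and it is \((t,0)\)-sparse as a subgraph of the sparse graph \(G\). Hence \(E(G-v)\) is independent in \(\M\) of rank \(t(|V|-1)-k\), and adjoining any \(k\) loops keeps the total equal to \(t(|V|-1)=t|V(G-v)|\); so it suffices to add \(k\) loops at distinct neighbours of \(v\) while preserving \((t,0)\)-sparsity, the count then forcing \((t,0)\)-tightness. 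To make distinctness automatic I would take as ground set the loops \(\{l_u:u\in N(v)\}\), exactly one per neighbour, and set \(\Gamma=(G-v)\cup\{l_u:u\in N(v)\}\). The whole lemma then reduces to the single claim that \(\Gamma\) has full rank \(t(|V|-1)\) in \(\M\): any augmentation of the independent set \(E(G-v)\) to a basis of \(\Gamma\) then adjoins exactly \(k\) of the \(l_u\), necessarily at \(k\) distinct neighbours, and the resulting sparse set of size \(t(|V|-1)\) is \((t,0)\)-tight.

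Proving this full-rank claim is the crux, and I would argue by contradiction. Assume \(r_\M(\Gamma)<t(|V|-1)\) and extend \(E(G-v)\) to a maximal independent subset \(I\) of \(\Gamma\), writing \(S=I\setminus E(G-v)\) for the set of adjoined loops, so that \(|S|=|I|-(t(|V|-1)-k)\le k-1\). Let \(B^*\) be the maximal \((t,0)\)-tight subgraph of \(I\), which is indeed tight by Lemma \ref{lem:unions}, and put \(\sigma=|S\cap V_{B^*}|\). Maximality of \(I\) forces that for every neighbour \(u\in N(v)\) with \(l_u\notin I\) the set \(I+l_u\) is dependent, so \(u\) lies in a tight subset of \(I\) and hence \(u\in V_{B^*}\). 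Consequently \(N(v)\setminus V_{B^*}\subseteq S\setminus V_{B^*}\), which gives the upper bound \(|N(v)\setminus V_{B^*}|\le |S|-\sigma\le k-1-\sigma\).

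I would then bound the same quantity from below using tightness of \(G\). Deleting the adjoined loops from \(B^*\) leaves a subgraph of \(G-v\) on \(V_{B^*}\) with at least \(t|V_{B^*}|-\sigma\) edges and loops, so applying \((t,0)\)-sparsity of \(G\) to \(V_{B^*}\cup\{v\}\) yields
\[
(t|V_{B^*}|-\sigma)+e_G(v,V_{B^*})+|L_G(v)|\le t(|V_{B^*}|+1),
\]
whence the number of edges from \(v\) to \(V\setminus(V_{B^*}\cup\{v\})\) is at least \(\inc(v)-(t+\sigma)=k-\sigma\). Since \(G\) has no multiple edges, these edges reach \(k-\sigma\) distinct vertices, so \(|N(v)\setminus V_{B^*}|\ge k-\sigma\), contradicting the upper bound of the previous paragraph. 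This contradiction establishes that \(\Gamma\) has full rank, and with it the lemma.

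The main obstacle is precisely this two-sided estimate on \(|N(v)\setminus V_{B^*}|\), and the delicate point it circumvents is the following: merely choosing the loop-vertices outside the single global maximal tight set of \(G-v\) does \emph{not} preserve sparsity, because adjoining one loop can create new tight sets that block a later loop. This is why the selection must be carried out as a matroid augmentation inside \(\Gamma\) rather than by a naive greedy rule, and why I phrased the crux as a full-rank statement. It is also the step where the no-multiple-edges hypothesis enters, since it is exactly what converts the lower bound of \(k-\sigma\) \emph{edges} leaving \(V_{B^*}\) into a bound on the number of \emph{distinct} neighbours available to carry the new loops.
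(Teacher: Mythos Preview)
Your argument is correct and is essentially the paper's proof repackaged in matroid language: instead of greedily adding loops one at a time and invoking Lemma~\ref{lem:unions} at the first failure, you phrase the goal as a single full-rank statement for $\Gamma$ and run the same tight-union-plus-sparsity contradiction once, with your $B^*$ and $\sigma$ playing the roles of the paper's $H$ and $b$. One minor note: your explicit appeal to ``Since $G$ has no multiple edges'' is in fact needed (and is tacitly used in the paper's count as well), since without it the conclusion can fail---e.g.\ take $t=2$ and two vertices joined by four parallel edges.
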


\begin{proof}
We show that there are distinct vertices \( v_1,\ldots,v_i \in N(v) \)
    such that \(H_i= (G - v) \cup \{l_1,\ldots,l_i\} \) is
    \( (t,0) \)-sparse for all $0\leq i\leq k$ by induction on $i$.
If \( i = 0 \) then \( G-v \) is \( (t,0) \)-sparse since it is a subgraph of $G$.

    %Now suppose \( k \geq 1 \).
    %First we claim that there is some \( v_1 \) such
    %\( G - v \cup \{l_1\} \) is \( (t,0) \)-sparse.
    %If there was no such vertex then each neighbour
    %of \( v \) is contained within a \( (t,0) \)-tight subgraph of
    %\( G-v \).
    %By Lemma
    %\ref{lem:unions} it follows that there is \( (t,0) \)-tight subgraph,
    %\( H \) of \( G-v \), that contains \( \mathcal N(v) \). But
    %then \( H \cup \{v\} \) spans an overfull subgraph of \( G \)
    %since \( |\mathcal L(v)| +|\mathcal N(v)| = d(v) - d^l(v) = t+k \geq t+1 \).

    Suppose inductively  that, for some    \(0\leq r \leq k-1\),
    we have vertices \( v_1,\ldots,
    v_r \in N(v) \) such that \(H_r
    %=( (G-v)\cup \{l_1,\ldots,l_r\}
 \) is
    \( (t,0) \)-sparse. For a contradiction, suppose that there is
    no neighbour \( v_{r+1} \in N(v) \) such that
    \( H_{r+1}\) is also \( (t,0) \)-sparse.
    Then for every \( u \in N(v) -\{v_1,\ldots,v_r\} \)
    there is some \( (t,0) \)-tight subgraph of \( H_r\)
    that contains \( u \). We can now use Lemma \ref{lem:unions} to obtain
    a \( (t,0) \)-tight subgraph \( H \) of \(H_r \) that
    contains \(N(v) -\{v_1,\ldots,v_r\} \). It is
    possible that \( H \) contains some of \( l_1,\ldots,l_r \).
    Without loss of generality, suppose that \( l_1,\ldots,l_b \in H\)
    and \( l_{b+1},\ldots,l_r \not\in H \).  The subgraph of $G$ induced by
    $V(H)\cup \{v\}$ is obtained from $H$ by deleting the loops $l_1,\ldots,l_b$, and adding the vertex $v$ and $\inc(v)-(r-b)$ edges incident to $v$. Since $H$ is $(t,0)$-tight and
    $$\inc(v)-(r-b)-b=k+t-r\geq k+t-(k-1)>t,$$
    this contradicts the fact that $G$ is $(t,0)$-sparse.

    Thus there are vertices \( v_1,\ldots,v_k \in N(v) \)
    such that \( H_k \) is \( (t,0) \)-sparse.
    Finally observe that \( H_k \) has
    \( \inc(v) - k = t\) fewer edges than \( G \) and one less
    vertex and so is, in fact, \( (t,0) \)-tight.
\end{proof}

\subsection*{Extension operation}
\label{sec:extmoves}

Let $H=(V,E,L)$ be a graph and $d\geq 1,0\leq k\leq d$ be integers.
The \emph{$d$-dimensional $k$-loop extension operation} forms a new graph $G$
from $H$ by deleting $k$ loops incident to distinct vertices of $H$ and adding a new vertex
$v$ and $d+k$ new edges and loops incident to $v$,
with the proviso that at least $k$ loops are added at $v$ and exactly one new edge is added from $v$ to each of the end-vertices of the $k$ deleted loops. Note that, since $k\leq d$, these conditions imply that $v$ is incident to at most $d$ loops and at most $d$ edges, see Figure \ref{fig:1-ext}.

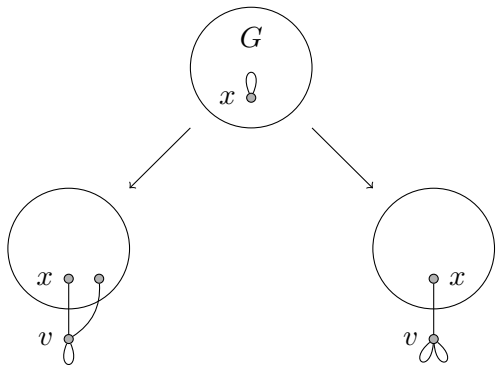
\begin{figure}[h]
\begin{center}
\begin{tikzpicture}[scale=0.8]
\node[] at (0,2) (G) [label=above:$G$]{};
\draw[] (0,2) circle (1cm);
\node[nodeblack] at (0,1.5) (x) [label=left:$x$]{}
 edge[in=113,out=67,loop]();

\draw[] (-3,-1) circle (1cm);
\node[nodeblack] at (-2.5,-1.5) (y2) []{};
\node[nodeblack] at (-3,-1.5) (x2) [label=left:$x$]{};
\node[nodeblack] at (-3,-2.5) (v2) [label=left:$v$]{}
 edge[] (x2)
 edge[bend right] (y2)
 edge[in=247,out=293,loop]();
\draw[->] (-1,1)--(-2,0);

\draw[] (3,-1) circle (1cm);
\node[nodeblack] at (3,-1.5) (x3) [label=right:$x$]{};
\node[nodeblack] at (3,-2.5) (v3) [label=left:$v$]{}
 edge[in=220,out=265,loop]()
 edge[in=275,out=320,loop]()
 edge[](x3);
\draw[->] (1,1)--(2,0);

%\draw[->] (0,0.7)--(0,0.2);

%\draw[] (3,-1) circle (1cm);
%\node[nodeblack] at (3.5,-1.2) (a2) []{};
%\node[nodeblack] at (2.5,-1.2) (b2) []{};
%\node[nodeblack] at (3,-1.5) (x2) []{};
%\node[nodeblack] at (3,-2.5) (v2) [label=left:$v$]{}
% edge[] (x2)
% edge[bend right](a2)
% edge[bend left](b2);

\iffalse

\begin{scope}[xshift=9cm]
\node[] at (0,2) (G) [label=above:$G$]{};
\draw[] (0,2) circle (1cm);
\node[nodeblack] at (-0.3,1.5) (x) [label=above:$x$]{};
\node[nodeblack] at (0.3,1.5) (u) [label=above:$u$]{}
 edge[](x);

\draw[] (-3,-1) circle (1cm);
\node[nodeblack] at (-2.7,-1.5) (u2) [label=above:$u$]{};
\node[nodeblack] at (-3.3,-1.5) (x2) [label=above:$x$]{};
\node[nodeblack] at (-3,-2.5) (v2) [label=left:$v$]{}
 edge[] (x2)
 edge[] (u2)
 edge[in=247,out=293,loop]();
\draw[->] (-1,1)--(-2,0);

\draw[] (3,-1) circle (1cm);
\node[nodeblack] at (3.8,-1.2) (z3) []{};
\node[nodeblack] at (2.7,-1.5) (x3) [label=above:$x$]{};
\node[nodeblack] at (3.3,-1.5) (u3) [label=above:$u$]{};
\node[nodeblack] at (3,-2.5) (v3) [label=left:$v$]{}
 edge[bend right] (z3)
 edge[] (u3)
 edge[](x3);
\draw[->] (1,1)--(2,0);
\end{scope}
\fi
\end{tikzpicture}
\end{center}
\caption{Possible $2$-dimensional $1$-loop extensions of a graph $G$.}
\label{fig:1-ext}
\end{figure}

\begin{lem}
    \label{lem:extension}
    Suppose that $G$ is obtained from $H$ by a $d$-dimensional $k$-loop extension operation which deletes a loop $l_j$ at $k$ distinct vertices $v_j$, $1\leq j\leq k$, of $H$ and adds a new vertex $v$. Suppose further that $H$ has an infinitesimally rigid realisation as a linearly-constrained framework in $\R^d$, and that $v_j$ is incident with at least \(\left\lceil \frac{(k-1)d}{k} \right \rceil
    \) loops in $H$ for all $1\leq j \leq k$ when $k\geq 2$. Then
    \( G \) has an infinitesimally rigid realisation as a linearly-constrained framework in \( \mathbb R^d \).
\end{lem}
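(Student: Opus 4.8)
The plan is to keep a rigid realisation of $H$ fixed and to extend it to a realisation of $G$ by choosing a position $p_v\in\R^d$ for the new vertex together with normals for the loops added at $v$, aiming for $\rank R(G,p,q)=d|V(G)|=d(|V(H)|+1)$. Since $H$ has a rigid realisation we may take the fixed realisation to be generic, so that $R(H,p,q)$ has rank $d|V(H)|$ and the loop normals at each vertex are as independent as possible. As the rows of $R(G,p,q)$ indexed by the edges and surviving loops of $H$ are zero on the $d$ columns indexed by $v$, the matrix has the block form
\[
R(G,p,q)=\begin{pmatrix} R(H-\{l_1,\dots,l_k\},p,q) & 0\\[2pt] A & M\end{pmatrix},
\]
where $A$ collects the entries of the new edge rows on the columns indexed by $V(H)$, and $M$ is the $(d+k)\times d$ matrix whose rows are the loop normals at $v$ and the edge vectors $p_v-p_w$ over the edges $vw$ of $G$. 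Writing $W=\ker R(H-\{l_1,\dots,l_k\},p,q)$, a direct computation identifies $\ker R(G,p,q)$ with the pairs $(\dot p_H,\dot p_v)\in W\oplus\R^d$ satisfying $M\dot p_v=c(\dot p_H)$, where the entry of $c(\dot p_H)$ on an edge row $vw$ is $(p_v-p_w)\cdot\dot p_w$ and on a loop row is $0$.

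Two preliminary observations make the problem tractable. First, $\dim W\le k$: since $(H,p,q)$ is infinitesimally rigid, the map $W\to\R^k$ sending $\dot p_H$ to $(q_{l_j}\cdot\dot p_{v_j})_{j=1}^k$ has kernel $\ker R(H,p,q)=0$ and is therefore injective. Second, choosing the loop normals at $v$ independent (there are at most $d$ of them) and $p_v$ generic makes $\rank M=d$; then $M\dot p_v=c(\dot p_H)$ is solvable if and only if $c(\dot p_H)$ lies in the column space of $M$, i.e.\ annihilates the $k$-dimensional left kernel of $M$, and when $\dot p_H=0$ the unique solution is $\dot p_v=0$. Consequently $(G,p,q)$ is infinitesimally rigid precisely when the $k$ compatibility functionals $\dot p_H\mapsto\langle\lambda,c(\dot p_H)\rangle$, $\lambda\in\ker M^{\mathsf T}$, vanish simultaneously on $W$ only at $\dot p_H=0$; as $\dim W\le k$, this is an injectivity statement about $k$ functionals on a space of dimension at most $k$.

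It therefore suffices to produce a single choice of $p_v$ and of the loop normals at $v$ for which these $k$ functionals are injective on $W$; equivalently, a maximal minor of $R(G,p,q)$, viewed as a polynomial in the coordinates of $p_v$ and of the new normals, is not identically zero, and lower semicontinuity of the rank then yields rigidity for generic, hence for some, realisation of $G$. The heart of the matter — and the step I expect to be the main obstacle — is establishing this injectivity when $k\ge 2$. The functional coming from the edge $vv_j$ measures $(p_v-p_{v_j})\cdot\dot p_{v_j}$, so I want the $k$ edges $vv_1,\dots,vv_k$ to detect independently the directions along which $\dot p_{v_j}$ is freed by deleting $l_j$; these are exactly the directions recorded by the injective parametrisation $\dot p_H\mapsto(q_{l_j}\cdot\dot p_{v_j})_j$ of $W$. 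For $k=1$ this is immediate, since $W$ is at most one-dimensional and a generic $p_v$ gives $(p_v-p_{v_1})\cdot\dot p_{v_1}\ne 0$; this is why the extra hypothesis is only needed for $k\ge 2$. For $k\ge2$ the danger is that the velocities $\dot p_{v_j}$ obtained from $W$ interfere, and this is precisely where the assumption that each $v_j$ carries at least $\lceil (k-1)d/k\rceil$ loops enters: together with the generic realisation of $H$ it confines each such $\dot p_{v_j}$ to a subspace of dimension at most $\lfloor d/k\rfloor+1$, which is small enough that a generic position of $v$ (after a suitable specialisation that triangularises the $k\times k$ system) separates the freed directions and forces the $k$ compatibility functionals to be independent on $W$. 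Verifying this separation is the technical core; the remaining points — that $\rank M=d$ and that $\dot p_H=0$ then forces $\dot p_v=0$ — are routine.
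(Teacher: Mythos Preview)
Your proposal is a plan rather than a proof: you correctly set up the block structure, correctly bound $\dim W\le k$, and correctly identify that rigidity of $G$ reduces to showing that a certain $k$-dimensional family of compatibility functionals is injective on $W$. But you then say ``verifying this separation is the technical core'' and stop. No specialisation is produced, and the vague suggestion that the small dimension $\lfloor d/k\rfloor+1$ of the subspaces containing each $\dot p_{v_j}$ will let you ``triangularise the $k\times k$ system'' is not an argument. So the central step is simply missing.

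The paper's proof takes a different route that sidesteps this difficulty entirely, and in particular it does \emph{not} keep the realisation of $H$ fixed. Starting from a generic rigid $(H,p_H,q_H)$, let $W_j=\langle q_H(L_H(v_j))\rangle$ and $A_j=p_H(v_j)+W_j$. The hypothesis $|L_H(v_j)|\ge\lceil(k-1)d/k\rceil$ is used for exactly one thing: a dimension count in general position showing that $\bigl(\bigcap_{j=1}^k A_j\bigr)\setminus\operatorname{aff}\,p_H(N_G(v))\neq\emptyset$. Choose $z$ in this set and put $p_G(v)=z$; then $z-p_H(v_j)\in W_j$ for each $j$. Now assign $k$ of the new loops $m_1,\dots,m_k$ at $v$ the normals $q_G(m_j)=z-p_H(v_j)$, complete the remaining normals at $v$ so that $\{z-p_H(u):u\in N_G(v)\}\cup q_G(L_G(v)\setminus\{m_1,\dots,m_k\})$ is a basis of $\R^d$, and --- the step your framework excludes --- \emph{redefine} the loop normals at each $v_j$ so that $q_G(L_G(v_j))\cup\{q_G(m_j)\}$ spans $W_j$. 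With these choices, for any infinitesimal motion $\dot p$ the edge $vv_j$ and the loop $m_j$ at $v$ together force $\dot p(v_j)\cdot q_G(m_j)=0$; combined with the redefined loops at $v_j$ this gives $\dot p(v_j)\in W_j^\perp$, which is precisely the constraint the deleted loop $l_j$ imposed. Hence $\dot p|_{V(H)}$ is a motion of $(H,p_H,q_H)$, so it vanishes, and then $\dot p(v)=0$ follows from the basis condition at $v$.

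The moral is that the paper engineers the edge directions $p_G(v)-p_G(v_j)$ to lie inside $W_j$ and then reshuffles the surviving loop normals at $v_j$ so that the edge $vv_j$ together with a loop at $v$ acts as a perfect substitute for the deleted $l_j$. This turns your delicate $k\times k$ injectivity problem into a one-line verification. By insisting on keeping the normals at $v_j$ fixed, you deny yourself this substitution and are left with an unresolved linear-algebra problem.
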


\begin{proof}
    Suppose that \( (H,p_H,q_H) \) is generic and rigid where
    \( p_H:V_H \rightarrow
    \mathbb R^d\) and \( q_H:L_H \rightarrow \mathbb R^d \).
    For each vertex \( u \in V_H \), define \( p_G(u) = p_H(u) \)
    and for any loop \( l\in L_H \) at a vertex other than \( v_1,\ldots,v_k \),
    define \( q_G(l) = q_H(l) \).

    For \( 1\leq j \leq k \),
    let \( W_j \) be the linear span of
    \( q_H(L_H(v_j)) \), $A_j=p_H(v_j)+W_j$, and $A$ be the affine span of $p_H(N_G(v))$.
    Since \( (p_H,q_H) \) is generic, the affine spaces $A_j$, $1\leq j\leq k$, and $A$ are in general position\footnote{A set $\{B_1,B_2,\ldots,B_s\}$ of affine subspaces of $\R^d$ is in {\em general position} in $\R^d$ if for any two disjoint subsets $S,T$ of $\{1,2,\ldots,s\}$
such that $\bigcap_{i \in S} B_i \neq \emptyset$ and  $\bigcap_{i \in T} B_i \neq \emptyset$, we have
$\dim \bigcap_{i \in S \cup T} B_i = \dim \bigcap_{i \in S} B_i + \dim \bigcap_{i \in T} B_i  -d$  when $\dim \bigcap_{i \in S} B_i + \dim \bigcap_{i \in T} B_i \geq d$,  and $\bigcap_{i \in S \cup T} B_i = \emptyset$ when $\dim \bigcap_{i \in S} B_i + \dim \bigcap_{i \in T} B_i \leq d-1$.
%$\dim \bigcap_{i\in S\cup T}B_i\geq \dim \bigcap_{i\in S}B_i+\dim \bigcap_{i\in T}B_i-d$, whenever
%$\dim \bigcap_{i\in S}B_i+\dim \bigcap_{i\in T}B_i\geq d$.
},
    \(\dim(A_j) \geq  \left\lceil \frac{(k-1)d}k \right\rceil \) and $\dim A=|N_G(v)|-1\leq d-1$.
    %The genericity of \( q_H \) and
    An elementary dimension counting argument now implies that $(\bigcap_{j=1}^k A_j)\sm A\neq \emptyset$.
    Choose \( z \in (\bigcap_{j=1}^k A_j)\sm A\) and put $p_G(v)=z$.

We next define \( q_G \) on \( L_G(v),L_G(v_1),\ldots,L_G(v_k) \).
    We first choose $k$ loops $m_j\in L_G(v)$, $1\leq j\leq k$, and put \( q_G(m_j) = z-p_G(v_j)  =
    z - p_H(v_j)\). By the construction
    of \( z \) and since \( p_H \) is generic, we see that \( I=\{z-p_H(u)\,:\,u\in N_G(v)\}\) is a linearly independent set, and hence we may extend the definition of $q_G$ to the whole of
    $L_G(v)$ in such a way that $I\cup q_G(L_G(v)\sm \{m_1,m_2,\ldots,m_k\})$ is a basis for $\R^d$.

    Now, observe that for \( 1 \leq j \leq k \), \( q_G(m_j) \) is a
    nonzero element of \( W_j \). Thus we may define  \( q_G:
    L_G(v_j)\rightarrow \mathbb R^d\) so that \( q_G(L_G(v_j)) \cup q_G(m_j)\) is a spanning set of \( W_j \). (Note
    that \( L_G(v_j) = L_H(v_j) \sm \{l_j\} \)).

    We will show  that \( (G,p_G,q_G) \) is a rigid realisation of \( G \)
    in \( \mathbb R^d \). Suppose that \( \dot p: V_G \rightarrow
    \mathbb R^d\) is an infinitesimal motion of \( (G,p_G,q_G) \).
    Then,
    \(
        (\dot p(v)-\dot p(v_j))\cdot(p_G(v)-p_G(v_j)) = 0= \dot p(v)\cdot q_G(m_j)
    \) for all $1\leq j\leq k$.
    Since  \( q_G(m_j) = z - p_H(v_j)=p_G(v)-p_G(v_j)  \), this gives
    \( \dot p(v_j)\cdot q_G(m_j) = 0 \) for all $1\leq j\leq k$.
    %It follows that \( \dot p(v_j) \in W_j^\perp \).
    Since we also have $\dot p(v_j)\cdot q_G(l)=0$ for all $l\in L_G(v_j)$ and \( q_G(L_G(v_j)) \cup q_G(m_j)\) is a spanning set of \( W_j \), \( \dot p(v_j) \in W_j^\perp \) for all $1\leq j\leq k$. This implies that
    \( \dot p \) restricts to an infinitesimal
    motion of \( (H,p_H,q_H) \). Thus \( \dot p(u) = 0 \) for all
    \( u \in V_H \).

    Finally, observe that since
    %\( p_H \) is generic,
    %\( q_G(L_G(v))\cup\{z-p_G(v_i)\,:\,v_i\in N_G(v) \)
    $I\cup q_G(L_G(v)\sm \{m_1,m_2,\ldots,m_k\})$ is a basis for $\R^d$
and \( \dot p(v) \) is orthogonal
    to all vectors in this basis, it must also be zero.
\end{proof}

\subsection*{Proof of Theorem \ref{thm:main}}
Necessity follows immediately from Lemma \ref{lem:nec},
so it only remains to prove sufficiency. We may assume without loss of generality that $G=(V,E,L)$ is a $(t,0)$-tight looped simple graph.
We will show that $G^\dmt$ has an infinitesimally rigid realisation in $\R^d$ by induction on $|V|$.
If $|V|=1$ then $G$ has $t$ loops. Hence $G^\dmt$ has $d$ loops and is generically rigid in
    \( \mathbb R^d \).

    Now suppose that \( |V|   \geq 2 \) and that
    the theorem is true for all graphs with at most \( |V|-1 \) vertices.
Let $\inc^*(u)$ denote the number of loops incident with each vertex $u$ of $G$.
Since \( |E| + |L| = t|V| \) and \( \sum_{u \in V} (\inc(u)+\inc^*(u)) = 2(|E|+|L|) \)
    there is  a \( v \in V \) such that \( \inc(v)+\inc^*(v) \leq 2t \). Also, since
    \( G \) is \( (t,0) \)-tight, \( \inc(v) \geq t \).
    Let \( k = \inc(v)-t  \).

    By Lemma \ref{lem:reduction} there are distinct vertices
    \( v_1,\ldots,v_k \in N(v)\)
    such that the graph $H$ obtained from $G - v$ by adding a new loop \( l_i \)  at \( v_i \)
    for all $1\leq i\leq k$
     is \( (t,0) \)-tight. By induction \( H^{[d-t]} \) is generically rigid in \(
    \mathbb R^d\).

     We will show that $G^\dmt$ is a $d$-dimensional $k$-loop extension of  $H^\dmt$.
This will follow from the hypothesis that $d\geq 2t$ and the facts that $k=\inc(v)-t$ and $t\leq \inc(v)\leq 2t$. These imply that $0\leq k\leq t$ and hence $k\leq d$ and $\delta_{G^\dmt}^*(v)\geq d-t\geq t\geq k$. In addition, for all $1\leq j\leq k$, we have
$$\delta_{H^\dmt}^*(v_j)\geq  d-t+1=\frac{(t-1)d}{t}+\frac{d}{t}-t+1 \geq \left\lceil \frac{(t-1)d}t \right\rceil
    \geq \left\lceil \frac{(k-1)d}k \right\rceil,$$
    since $d\geq t(t-1)$ and \( t \geq k \).
We can now use Lemma \ref{lem:extension} to deduce that $G^\dmt$ has an infinitesimally rigid realisation in $\R^d$.
\qed

\section{Plane-constrained frameworks}
\label{sec:planes}

We will prove Theorem \ref{thm:plane}. We define the {\em degree}, $\deg_G(x)$, of a vertex $v$ in a graph $G$ to be the number of edges and loops incident to $v$, counting each loop {\em twice}.  We say that a graph $G$ is {\em $k$-regular} if each vertex of $G$ has degree $k$.
Our proof technique of Theorem \ref{thm:plane} uses induction on the order of $G$ when $G$ is not simple and $4$-regular, combined with an ad hoc argument for this exceptional case.
We will need some further results to deal with this case.

\begin{lem}\label{lem:4reg}
Let $G=(V,E)$ be a 4-regular simple graph. Then $G$ is $(2,0)$-tight. Moreover if $G$ is connected then $|F|\leq 2|V_F|-1$ for all $F\subsetneq E$.
\end{lem}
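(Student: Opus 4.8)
The plan is to reduce everything to a comparison of edge-endpoint counts against the degree bound supplied by 4-regularity. First I would record the count condition: since $G$ is simple and every vertex is incident to exactly four edges, the handshake identity gives $2|E|=\sum_{v\in V}\deg_G(v)=4|V|$, so $|E|=2|V|$. As $L=\emptyset$ here, this is precisely the equality $|E|+|L|=2|V|$ required for $(2,0)$-tightness, so it remains only to verify $(2,0)$-sparsity.

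Next I would establish sparsity by a direct counting argument. Fix $\emptyset\neq F\subseteq E$ and consider the subgraph $(V_F,F)$. Counting the endpoints of the edges of $F$ and using that each vertex of $V_F$ has degree at most four in $G$, hence at most four in $(V_F,F)$, I obtain
\[
2|F| = \sum_{v\in V_F}\deg_{(V_F,F)}(v) \leq 4|V_F|,
\]
so that $|F|\leq 2|V_F|$. Combined with $|E|=2|V|$, this shows that $G$ is $(2,0)$-tight.

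For the moreover part I would analyse exactly when the displayed inequality is tight. Suppose $G$ is connected and $\emptyset\neq F\subsetneq E$ satisfies $|F|=2|V_F|$. Then equality holds throughout, which forces $\deg_{(V_F,F)}(v)=4=\deg_G(v)$ for every $v\in V_F$; that is, every edge of $G$ meeting $V_F$ already lies in $F$. Consequently no edge of $G$ has exactly one endpoint in $V_F$, so $V_F$ and $V\setminus V_F$ are separated in $G$. Since $F\neq\emptyset$ we have $V_F\neq\emptyset$, and connectivity then forces $V_F=V$; but then every edge of $G$ lies in $F$, contradicting $F\subsetneq E$. Hence $|F|<2|V_F|$, and as both sides are integers this yields $|F|\leq 2|V_F|-1$.

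The whole argument is elementary, and the only step requiring any care is the equality analysis in the final paragraph: I must convert the tightness of the counting bound into the structural statement that $V_F$ is closed under adjacency, and then combine connectivity with the properness $F\subsetneq E$ to derive the contradiction. (Note that $F$ must be taken nonempty throughout, consistent with the sparsity convention, since for $F=\emptyset$ the bound $|F|\leq 2|V_F|-1$ would read $0\leq -1$.)
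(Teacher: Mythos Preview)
Your proof is correct and follows essentially the same approach as the paper: both arguments use the degree-sum bound $2|F|\leq 4|V_F|$ (equivalently, the paper phrases this as an average-degree statement) to obtain sparsity, and both analyse the equality case by observing that every vertex of $V_F$ must then have full degree~$4$ inside $(V_F,F)$, so connectivity together with $F\subsetneq E$ yields a contradiction. Your write-up is somewhat more explicit in spelling out why equality forces $V_F$ to be closed under adjacency, but the underlying idea is identical.
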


\begin{proof}
Since $G$ is 4-regular we have $|E|=2|V|$. Suppose $G$ is not $(2,0)$-sparse. Then there is some $F\subset E$ with $|F|>2|V_F|$. This implies that $G[F]$ has average degree strictly greater than 4, contradicting the fact that $G$ is 4-regular.

Now assume $G$ is connected. Suppose $|F|=2|V_F|$ for some $F\subsetneq E$. This implies $G[F]$ has average degree exactly 4. Since $G$ is connected and $F\subsetneq E$ there exists a vertex $x\in V_F$ with $\deg_G(x)>4$, contradicting the fact that $G$ is 4-regular.
\end{proof}

The next result gives a sufficient condition for the rigidity matrix of a generic bar-joint framework
in $\R^3$ to have independent rows.

\begin{thm}(\cite[Theorem 3.5]{JJ})\label{thm:maxmin}
Let $G=(V,E)$ be a connected  simple graph on at least three
vertices with minimum degree at most $4$ and maximum degree at most
$5$, and $(G,p)$ be a generic realisation of $G$ as a bar-joint framework in $\R^3$. Then the
rows of the bar-joint rigidity matrix $R(G,p)$ are linearly independent if and only if $|F|\leq 3|V_F|-6$ for all $F\subseteq V$ with $|F|\geq 2$.
\end{thm}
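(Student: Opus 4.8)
The plan is to prove the two directions separately. Necessity is the routine Maxwell count: if the rows of $R(G,p)$ are independent then for every $F\subseteq E$ with $|V_F|\ge 3$ the rows indexed by $F$ are independent, so $|F|=\rank R(G[V_F],p)$, and since the kernel of $R(G[V_F],p)$ contains the $6$-dimensional space of trivial infinitesimal isometries of $\R^3$, this rank is at most $3|V_F|-6$; the cases $|V_F|\le 2$ are immediate. For sufficiency I would induct on $|V|$ within the class of connected, $(3,6)$-sparse graphs with $\delta(G)\le 4$ and $\Delta(G)\le 5$, the base cases $|V|\in\{3,4\}$ (subgraphs of $K_4$) being checked directly.

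Since $\delta(G)\le 4$, fix a vertex $v$ of minimum degree. Vertices of degree at most $2$ are disposed of immediately, their at most two rows being independent of, and column-disjoint from, the rest; so the substantive cases are $\deg(v)\in\{3,4\}$. If $\deg(v)=3$ I delete $v$ (inverse $0$-extension): the three rows at $v$ are the only ones meeting $v$'s coordinate columns and are generically independent, so $R(G,p)$ has independent rows if and only if $R(G-v,p)$ does. If $\deg(v)=4$ I delete $v$ and insert one edge $x_ix_j$ between two of its four neighbours $x_1,\dots,x_4$ (inverse $1$-extension), and invoke the classical edge-split lemma that a $1$-extension preserves generic independence in $\R^3$; thus independence of $G'=G-v+x_ix_j$ implies independence of $G$, and it remains to choose $\{x_i,x_j\}$ so that $G'$ is again in the inductive class.

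The degree hypotheses are exactly what make this induction close up. Deleting $v$ never raises a degree, and inserting $x_ix_j$ leaves $\deg x_i,\deg x_j$ unchanged (a lost spoke compensated by the new edge), so $\Delta(G')\le 5$ throughout. Moreover at least one neighbour of $v$ loses a spoke without compensation --- all three in the degree-$3$ case, and the two neighbours outside $\{x_i,x_j\}$ in the degree-$4$ case --- and since every such vertex had degree at most $\Delta(G)\le 5$ in $G$, it has degree at most $4$ in $G'$; hence $\delta(G')\le 4$ automatically. Thus only the $(3,6)$-sparsity (and connectivity) of $G'$ can fail, and these must be secured by the choice of pair. A non-edge pair among the neighbours always exists, since $\{v\}\cup N(v)$ spans at most $3\cdot 5-6=9$ edges, four of which are the spokes, leaving at most five of the $\binom 42=6$ neighbour-pairs as edges.

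The main obstacle is to show that some non-edge pair is \emph{admissible}, i.e.\ its insertion keeps $G-v$ $(3,6)$-sparse and the graph connected. Insertion of $x_ix_j$ destroys sparsity exactly when $x_i,x_j$ lie together in a $(3,6)$-tight subgraph of $G-v$. I would assume every non-edge pair is blocked by such a tight set and combine these sets using the submodularity of $F\mapsto|V_F|$ --- the $(3,6)$-analogue of Lemma \ref{lem:unions}, by which unions and intersections of tight sets are tight --- aiming to produce a single tight set $T$ with $N(v)\subseteq V(T)$; re-attaching $v$ with its four spokes then gives a subgraph on $|V(T)|+1$ vertices with $(3|V(T)|-6)+4=3(|V(T)|+1)-5$ edges, contradicting $(3,6)$-sparsity of $G$. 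I expect the genuinely hard part to be the case analysis needed when the blocking tight sets do not merge into one containing all four neighbours, where the bound $\Delta(G)\le 5$ must be used to limit their overlaps, together with the parallel task of choosing the admissible pair so as to keep $G'$ connected (treating separately the case that $v$ is a cut-vertex by arguing on the components of $G-v$). This existence-of-an-admissible-reduction step is the heart of the proof.
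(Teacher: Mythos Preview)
The paper does not prove this theorem; it is quoted from \cite{JJ} and used as a black box in the proof of Lemma~\ref{lem:4regind}. There is therefore no proof in the present paper to compare your attempt against.

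On the merits of your sketch: the overall strategy---reduce a vertex of degree at most $4$ by an inverse $0$- or $1$-extension and check that the hypotheses $\delta\le 4$, $\Delta\le 5$ survive---is the standard one, and your bookkeeping on the degrees after reduction is correct. The gap is exactly where you locate it. Note that the ``$(3,6)$-analogue of Lemma~\ref{lem:unions}'' you invoke is valid only when the two tight subgraphs share at least two vertices: if $|V(H)\cap V(K)|\le 1$ then $|E(H\cup K)|\le 3|V(H\cup K)|-9$, so $H\cup K$ is not tight and blocking sets for disjoint pairs among $N(v)$ cannot in general be merged. Handling these small-intersection cases is where the argument in \cite{JJ} does its real work, and the bound $\Delta\le 5$ is used there in an essential way, not merely to keep the induction within the right class. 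Your proposal is an accurate map of the proof's shape, but the step you flag as ``the heart of the proof'' is essentially the entire content of the sufficiency direction, and you have not supplied it.
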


Our next result concerns frameworks on surfaces.
Suppose $\M$ is an irreducible surface in $\mathbb{R}^3$
defined by a polynomial $f(x,y,z)=r$ and
$q=(x_1,y_1,z_1,\ldots,x_n,y_n,z_n)\in \R^{3n}$. Then the {\em family
of `concentric' surfaces induced by $q$}, $\M^q$, is the family
defined by the polynomials $f(x,y,z)=r_i$ where $r_i=f(x_i,y_i,z_i)$
for $1\leq i\leq n$.

\begin{lem}(\cite[Lemma 9]{JNstress})\label{lem:perturb}
Suppose  $(G,p)$ is an infinitesimally rigid framework on some
surface $\M$ in $\R^3$. Then $(G,q)$ is infinitesimally rigid on
$\M^q$ for all generic $q\in \R^{3|V|}$.
\end{lem}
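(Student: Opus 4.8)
The plan is to prove this by a rank-plus-genericity argument carried out directly on the \emph{surface rigidity matrix}, exploiting the fact that its entries are polynomials defined on all of $\R^{3|V|}$ and not merely on configurations lying on $\M$. Write $\M=\{x\in\R^3 : f(x)=r\}$ and put $n=|V|$. For a configuration $x=(x_1,\dots,x_n)\in\R^{3n}$ let $M(x)$ be the matrix with one row for each edge $v_iv_j\in E$, namely the bar-joint row carrying $x_i-x_j$ in the columns of $v_i$ and $x_j-x_i$ in the columns of $v_j$, together with one row for each vertex $v_i$ carrying the gradient $\nabla f(x_i)$ in the columns of $v_i$. Because $f$ is a polynomial, every entry of $M(x)$ is a polynomial in $x$. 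A map $\dot x\colon V\to\R^3$ is an infinitesimal motion of $(G,x)$ on $\M^x$ exactly when $M(x)\dot x=0$; hence $(G,x)$ is infinitesimally rigid on $\M^x$ if and only if $\rank M(x)=3n-\alpha$, where $\alpha$ is the type.

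The lower bound on the rank is immediate from the hypothesis. Since $(G,p)$ is infinitesimally rigid on $\M$ we have $\rank M(p)=3n-\alpha$, so some $(3n-\alpha)\times(3n-\alpha)$ minor $\mu$ of $M$ has $\mu(p)\neq 0$. As $\mu$ is a polynomial on $\R^{3n}$ that does not vanish at the point $p$, it is not identically zero, and therefore $\mu$ is nonzero on a Zariski-dense open subset of $\R^{3n}$. Consequently $\mu(q)\neq 0$, and so $\rank M(q)\geq 3n-\alpha$, for every generic $q\in\R^{3n}$. The crucial observation is that although $p$ lies on the proper subvariety $\M^n\subseteq\R^{3n}$ and is highly non-generic as a point of $\R^{3n}$, the nonvanishing of $\mu$ at this single point already forces generic nonvanishing over the whole space.

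For the matching upper bound I would use the infinitesimal isometries shared by the whole concentric family. Let $\I$ be the space of infinitesimal isometries of $\R^3$ that preserve $f$, so that $\dim\I=\alpha$ and the trivial motions on both $\M$ and $\M^x$ are exactly the restrictions of elements of $\I$. For any configuration $x$ and any $\zeta\in\I$, the map $v_i\mapsto\zeta(x_i)$ lies in $\ker M(x)$: the edge equations hold because an infinitesimal isometry of $\R^3$ preserves all pairwise distances, and the vertex equations $\nabla f(x_i)\cdot\zeta(x_i)=0$ hold because an $f$-preserving field is tangent to every level set of $f$. The restriction map $\I\to\ker M(p)$ is onto by the rigidity hypothesis and both spaces have dimension $\alpha$, so it is an isomorphism; injectivity of the evaluation $\zeta\mapsto(\zeta(x_1),\dots,\zeta(x_n))$ is a Zariski-open condition that holds at $p$ and hence at generic $q$. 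Thus $\ker M(q)$ has dimension at least $\alpha$ and $\rank M(q)\leq 3n-\alpha$ for generic $q$. Combining the two bounds gives $\rank M(q)=3n-\alpha$, which is precisely infinitesimal rigidity of $(G,q)$ on $\M^q$.

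I expect the main obstacle to lie in the upper bound, specifically in the bookkeeping around the type: one must confirm that spreading the vertices over distinct level sets of $f$ introduces no new infinitesimal isometries, so that $\M^q$ genuinely has type $\alpha$ and the target rank is $3n-\alpha$ rather than $3n$, and that the $\alpha$ persistent isometric motions stay linearly independent after restriction to a generic vertex set. Once these points are settled, the proof reduces to the clean comparison between the generic rank of the polynomial matrix $M$ on $\R^{3n}$ and its value at the special rigid configuration $p$ supplied by the hypothesis.
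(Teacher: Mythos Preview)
The paper does not prove this lemma; it is quoted from \cite[Lemma 9]{JNstress} and used as a black box, so there is no in-paper argument to compare against. Your approach---writing the surface rigidity matrix $M(x)$ with polynomial entries, extracting a nonvanishing $(3n-\alpha)$-minor at $p$, and concluding the same minor is nonzero at every generic $q$---is the standard one, and the lower bound $\rank M(q)\geq 3n-\alpha$ is correct as written.

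The issue you flag is the right one and is the only nontrivial step. You need $\dim\I=\alpha$, i.e.\ that every infinitesimal Euclidean isometry $\zeta$ tangent to the single irreducible level set $\M=\{f=r\}$ is in fact tangent to every level set of $f$. This follows from a degree count: $\zeta(x)=Ax+b$ with $A$ skew, so $\zeta\cdot\nabla f$ is a polynomial of degree at most $\deg f$ that vanishes on the irreducible hypersurface $\{f=r\}$; hence $\zeta\cdot\nabla f=c(f-r)$ for some constant $c$. Comparing top-degree homogeneous parts gives $f_d(R_tx)=e^{ct}f_d(x)$ along the orthogonal flow $R_t=e^{tA}$, and taking suprema over the unit sphere forces $e^{ct}=1$, so $c=0$ and $\zeta\cdot\nabla f\equiv 0$. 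Thus $\I$ coincides with the infinitesimal isometries of $\M$, the type of $\M^q$ is $\alpha$, and your two bounds meet at $\rank M(q)=3n-\alpha$.
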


We need two additional concepts for our next lemma. First, recall
that the edge sets of the simple $(2,1)$-sparse subgraphs of a graph
$G=(V,E)$ are the independent sets of a matroid on $E$. We call this
the \emph{simple $(2,1)$-sparse matroid} for $G$. Secondly, we define
an \emph{equilibrium stress} for a linearly-constrained  framework
$(G,p,q)$ in $\mathbb{R}^3$ to be a pair $(\omega,\lambda)$, where
$\omega:E\to \R$, $\lambda:L\to \R$ and $(\omega,\lambda)$ belongs
to the cokernel of $R(G,p,q)$.

\begin{lem}\label{lem:4regind}
Let $G=(V,E)$ a  $4$-regular connected  simple graph which is
distinct from $K_5$. Then $G^\one$ can be realised as an
infinitesimally rigid plane-constrained framework in
$\mathbb{R}^3$.
\end{lem}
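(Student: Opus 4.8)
The plan is to use the fact that $G^\one$ has exactly $|E|+|L|=2|V|+|V|=3|V|$ rows, so $R(G^\one,p,\nu)$ is a square $3|V|\times 3|V|$ matrix and infinitesimal rigidity is equivalent to nonsingularity. Since a generic realisation has maximal rank, it suffices to exhibit a single realisation of $G^\one$ at which $\rank R(G^\one,p,\nu)=3|V|$, equivalently to show that $\det R(G^\one,p,\nu)$ is not identically zero. Before doing this I would record two consequences of the hypotheses. First, because $G$ is $4$-regular, connected, simple and distinct from $K_5$, it has at least $6$ vertices and contains no $K_5$ subgraph (a $K_5$ subgraph would, by $4$-regularity and connectedness, exhaust $G$), so $G$ is $(3,6)$-sparse; as $G$ has maximum and minimum degree $4$, Theorem \ref{thm:maxmin} shows that a generic bar-joint realisation $(G,p)$ in $\R^3$ has independent rows, i.e.\ no non-zero bar-joint equilibrium stress. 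Second, fixing any edge $e$ of $G$, Lemma \ref{lem:4reg} gives $|F|\le 2|V_F|-1$ for all proper $F$ together with $|E|=2|V|$, so $G-e$ is $(2,1)$-tight.

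Next I would bring in the surface machinery. Let $\M$ be a type $1$ surface of revolution in $\R^3$, for instance a non-spherical ellipsoid of revolution. Since $G-e$ is $(2,1)$-tight, Theorem \ref{thm:type1} (with $\alpha=1$) provides a generic infinitesimally rigid realisation of $G-e$ on $\M$, and Lemma \ref{lem:perturb} then gives, for generic $p\in\R^{3|V|}$, an infinitesimally rigid realisation of $G-e$ on the concentric family $\M^p$. Writing $\nu$ for the map sending each loop to the normal of the concentric surface through the corresponding joint, this means $\rank R((G-e)^\one,p,\nu)=3|V|-1$, the one-dimensional kernel being spanned by the infinitesimal rotation $\eta$ about the axis of revolution that is common to every surface of $\M^p$. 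As $\eta$ is the restriction of an ambient isometry it preserves the length of $e$, so reinserting the row indexed by $e$ does not change the kernel: $R(G^\one,p,\nu)$ is a square matrix of rank $3|V|-1$ with kernel $\langle\eta\rangle$ and a one-dimensional cokernel spanned by an equilibrium stress $(\omega^*,\lambda^*)$.

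Finally I would break the last degree of freedom by perturbing a single plane. Since $p$ is generic, no joint lies on the axis, so $\eta_v\neq 0$ for every vertex $v$. If $\lambda^*\equiv 0$ then $\omega^*$ would be a non-zero bar-joint equilibrium stress of $(G,p)$, contradicting the first paragraph; hence $\lambda^*_v\neq 0$ for some vertex $v$. Let $\nu_t$ be obtained from $\nu$ by replacing the normal at the loop at $v$ with $\nu(v)+tw$ and leaving all other data fixed. Because $R(G^\one,p,\nu)$ has rank $3|V|-1$, its adjugate equals $c\,\eta\,(\omega^*,\lambda^*)^{\mathsf T}$ for a non-zero constant $c$, and Jacobi's formula gives that the derivative at $t=0$ of $\det R(G^\one,p,\nu_t)$ equals $c\,\lambda^*_v\,(w\cdot\eta_v)$. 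Choosing $w$ with $w\cdot\eta_v\neq 0$ makes this non-zero, so $\det R(G^\one,p,\nu_t)\neq 0$ for small $t\neq 0$; thus the determinant is not identically zero and a generic plane-constrained realisation of $G^\one$ is infinitesimally rigid.

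The crux is the step ruling out $\lambda^*\equiv 0$: on a single surface the joints are not generic in $\R^3$, so Theorem \ref{thm:maxmin} cannot be applied to them directly. The device that overcomes this is Lemma \ref{lem:perturb}, which relocates the joints to a generic point of $\R^{3|V|}$ while retaining the rotational symmetry, so that the rank stays exactly $3|V|-1$, the surviving motion is a genuine ambient rotation, and both Theorem \ref{thm:maxmin} and the adjugate computation become available at the same configuration. The exclusion of $K_5$ enters precisely here, since $K_5$ is exactly the obstruction to the bar-joint independence furnished by Theorem \ref{thm:maxmin} (indeed $(\omega,0)$ with $\omega$ the $K_5$ stress is an equilibrium stress of $K_5^\one$ for every choice of planes).
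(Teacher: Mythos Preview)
Your proof is correct and follows essentially the same strategy as the paper: both place the framework on a type~$1$ surface, use Lemma~\ref{lem:perturb} to move to a generic configuration in $\R^{3|V|}$ with rank exactly $3|V|-1$ and a one-dimensional isometry kernel, invoke Theorem~\ref{thm:maxmin} to force the unique equilibrium stress to have a nonzero loop component, and then alter a single loop normal to kill the remaining motion. The only cosmetic differences are your choice of surface (an ellipsoid of revolution, whose residual motion is a rotation, versus the paper's elliptical cylinder, whose residual motion is a translation along the axis) and your Jacobi/adjugate perturbation of the normal in place of the paper's direct replacement $\tilde q(f)=(0,0,1)$.
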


\begin{proof}
Let $\E$ be the surface in
$\R^3$ defined by the equation $x^2+2y^2=1$. Then $\E$ is an elliptical
cylinder centred on the $z$-axis and has type 1. Let $p:V\to \R^3$
be generic, and $(G,p)$ be the corresponding framework on
%$(G^{[1]},p,q)$ be the plane-constrained framework
%in $\R^3$ we get by constraining $G$ to lie on
the family of
concentric elliptical cylinders $\E^p$ induced by $p$. Lemma
\ref{lem:4reg} implies that $E$ is a circuit in the simple
$(2,1)$-sparse matroid for $G$. Theorem \ref{thm:type1} and Lemma
\ref{lem:perturb} now imply that $(G-e,p)$ is infinitesimally rigid on $\E^p$
for all $e\in E$. Hence the
only infinitesimal motions of $(G-e,p)$ on $\E^p$ are translations in
the direction of the $z$-axis.

Let $(G^{[1]},p,q)$ be the plane-constrained framework corresponding to $(G,p)$ on
$\E^p$. Then $(G^{[1]},p,q)$ has the same (1-dimensional) space of infinitesimal motions as $(G,p)$ on $\E^p$ and hence
$\rank R(G^{[1]},p,q)=\rank R(G^{[1]}-e,p,q)=3|V|-1$ for all $e\in E$.
%and that $E$
%is a circuit in the generic rigidity matroid on $\E^p$. Hence the
%only infinitesimal motions of $(G^{[1]},p,q)$ are translations in
%the direction of the $z$-axis.
%Since $G$ is a circuit in the rigidity matroid on $\E^q$,
% $(G^{[1]},p,q)$ has a
This implies that $(G^{[1]},p,q)$ has a unique non-zero equilibrium stress $(\omega,\lambda)$ up to scalar multiplication.
%and
% $\omega_e\neq 0$ for all $e\in E$.
Since $G$ is simple, 4-regular and distinct from $K_5$, we have
 $|F|\leq 3|V_F|-6$ for all $F\subset E$ with $|F|\geq 2$.
 Theorem \ref{thm:maxmin} now implies that the rows of $R(G^{[1]},p,q)$ indexed by $E$ are linearly independent
 and hence we must have
 $\lambda_f\neq 0$ for some $f\in L$. It follows that the
 matrix $R_f$ obtained from $R(G^{[1]},p,q)$ by deleting the row indexed by
 $f$ has $\ker R_f=\ker R(G^{[1]},p,q)$ and hence each $\dot p\in \ker R_f$ corresponds to a translation along the
 $z$-axis.
 Let $(G^{[1]},p,\tilde q)$ be the plane-constrained framework with $\tilde q(e)=q(e)$ for all $f\in L-f$ and
 $\tilde q(f)=(0,0,1)$. Then $\ker R(G,p,\tilde q)\subseteq \ker R_f$.
 The choice of $\tilde q(f)$ implies that no nontrivial translation along the $z$-axis can belong to
 $\ker R(G^{[1]},p,\tilde q)$. Hence $\ker R(G^{[1]},p,\tilde q)=\{0\}$ and $(G^{[1]},p,\tilde q)$ is an
 infinitesimally rigid plane-constrained framework in $\R^3$.
\end{proof}

We can now prove Theorem \ref{thm:plane}.

%\begin{thm}\label{thm:3dmain}
%  Let $G=(V,E,L)$ be a looped simple graph. Then  $G^\dl$ can be realised as an infinitesimally rigid
%  plane-constrained framework in $\R^d$ if and only if $G$ has a  spanning subgraph which is $(2,0)$-tight, and
%  in addition, contains no copy of $K_5$ when $d=3$.
%\end{thm}

\subsection*{Proof of Theorem \ref{thm:plane}} We first prove necessity.
Suppose that $G^{\one}$ can be realised as an infinitesimally rigid plane-constrained framework in $\R^3$. We can show as in the proof of Lemma \ref{lem:nec} that $G$ has a spanning looped simple $(2,0)$-tight subgraph $H$, with a realisation $(H,p,q)$ such that
the rows of $R(H^\one,p,q)$ are linearly independent and $\rank R(H^\one,p,q)=d|V|$. If $H$ had a subgraph $K$ which is isomorphic to $K_5$, then the fact that $K_5$ is generically dependent as a bar-joint framework in $\R^3$ would imply that the rows of $R(H^{[1]},p,q)$ labelled by $E(K)$ are linearly dependent. Hence $H$ contains no copy of $K_5$.

We next prove sufficiency.  Suppose $G$ has a  spanning looped simple subgraph which is $(2,0)$-tight, and
  in addition, contains no copy of $K_5$. We will prove that $G^\one$ can be realised as an infinitesimally rigid
  plane-constrained framework in $\R^3$ by induction on $|V|+|E|+|L|$. We may
assume that $G$ is connected and  so  $|E|+|L|=2|V|$. If $G$ is the graph
with one vertex and two loops, then it is easy to see that $G^\one$
has an infinitesimally rigid realisation in $\R^3$, so we may assume that $|V|\geq 2$.
Let $ v $ be a vertex of $ G $ chosen so that $\delta (v) $ is as small as possible. Since $ G $ is $(2, 0) $-tight, $2\leq\delta (v)\leq 4 $.

Suppose $\delta (v)\in\{2, 3\} $. By Lemma \ref {lem:reduction}, $ G $ can be reduced to  smaller $(2,0)$-tight graph
$H$ by deleting $ v $ and adding $\delta(v)-2 $ loops to $ N (v) $.
By induction
$H^\one$ has an infinitesimally rigid realisation  in $\R^3$. We can now
apply Lemma \ref{lem:extension}  to deduce that
$G^\one$ has an infinitesimally rigid realisation.

 It remains to consider the case when $\delta (v)=4 $. Since $G$ is $(2, 0) $-tight, it must be 4-regular and simple. Since $ G $ is connected and not equal to $ K_5 $,  Lemma \ref {lem:4regind} now implies that $ G $ has an infinitesimally rigid realisation in $\R^3 $.
\qed

\section{Line-constrained frameworks}
\label{sec:lines}

 Given a graph $G$ and an integer $d\geq 2$, Theorem \ref{thm:main} implies that  $G^{[d-1]}$ can be realised as an infinitesimally rigid line-constrained framework in $\mathbb{R}^d$ if and only if $G$ has a spanning $(1,0)$-tight subgraph. We will prove
a more general result which allows nongeneric line constraints. We will need a lemma concerning non-generic $d$-dimensional $0$-loop extensions, which we will refer to simply as {\em $d$-dimensional $0$-extensions}. See Figure \ref{fig:0-ext}
for an illustration when $d=2$.

\begin{figure}[h]
\begin{center}
\begin{tikzpicture}[scale=0.8]
\node[] at (0,2) (G) [label=above:$G$]{};
\draw[] (0,2) circle (1cm);

\draw[] (-3,-1) circle (1cm);
\node[nodeblack] at (-3.5,-1.5) (x1) []{};
\node[nodeblack] at (-2.5,-1.5) (y1) []{};
\node[nodeblack] at (-3,-2.5) (v1) [label=left:$v$]{}
 edge[] (x1)
 edge[] (y1);
\draw[->] (-1,1)--(-2,0);

\draw[] (0,-1) circle (1cm);
\node[nodeblack] at (0,-1.5) (x2) []{};
\node[nodeblack] at (0,-2.5) (v2) [label=left:$v$]{}
 edge[] (x2)
 edge[in=247,out=293,loop]();
\draw[->] (0,0.7)--(0,0.2);

\draw[] (3,-1) circle (1cm);
\node[nodeblack] at (3,-2.5) (v3) [label=left:$v$]{}
 edge[in=220,out=265,loop]()
 edge[in=275,out=320,loop]();
\draw[->] (1,1)--(2,0);
\end{tikzpicture}
\end{center}
\caption{Possible $2$-dimensional $0$-extensions of a graph $G$.}
\label{fig:0-ext}
\end{figure}
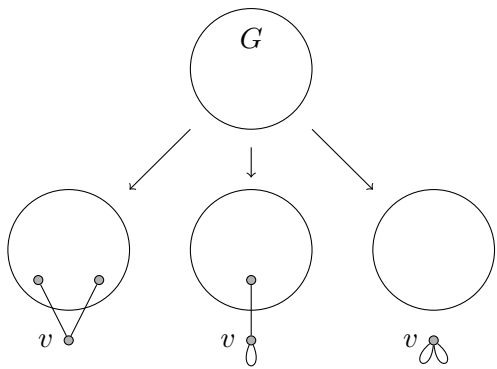

\begin{lem}\label{lem:02ext}
Let $G$ be a  graph and $H$ be constructed from $G$ by a $d$-dimensional $0$-extension operation which adds a new vertex $v_0$, new edges $v_0v_1,v_0v_2,\ldots, v_0v_t$, and new loops $e_{t+1},e_{t+2},\ldots,e_d$.
Suppose $(G,p,q)$ is a realisation of
$G$ in $\R^d$
and $(H,\hat p,\hat q)$ is a realisation of $H$ with $\hat p|_G=p$ and $\hat
q|_G=q$.
Then
$(H,\hat p,\hat q)$ is  infinitesimally rigid if and only if
$(G, p, q)$ is  infinitesimally rigid and $\{\hat p_0-\hat p_1,\hat p_0-\hat p_2,\ldots,\hat p_0-\hat p_t,\hat q(e_{t+1}),\ldots,\hat q(e_d)\}$ is
linearly independent.
\end{lem}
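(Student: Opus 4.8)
The plan is to put $R(H,\hat p,\hat q)$ into block form and read off the kernel. First I would order the rows so that the $t$ edges $v_0v_1,\dots,v_0v_t$ and the $d-t$ loops $e_{t+1},\dots,e_d$ at $v_0$ come first, followed by the edges and loops of $G$; and order the columns so that the $d$ columns indexed by $v_0$ come first, followed by the columns indexed by $V(G)$. Since $v_0$ is incident in $H$ only with the new edges and loops, the old rows are zero on the $v_0$-columns, and since loops touch only their own vertex, the new loop rows are zero on every old column. Hence
\[
R(H,\hat p,\hat q)=\begin{pmatrix} A & B\\ 0 & R(G,p,q)\end{pmatrix},
\]
where $A$ is the $d\times d$ matrix whose rows are $\hat p_0-\hat p_1,\dots,\hat p_0-\hat p_t,\hat q(e_{t+1}),\dots,\hat q(e_d)$, and $B$ is supported on the edge rows, the row indexed by $v_0v_j$ carrying $\hat p_j-\hat p_0$ in the block of columns indexed by $v_j$. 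The linear-independence hypothesis on the set in the statement is then exactly the condition $\rank A=d$, i.e. that $A$ is invertible.

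Next I would analyse $\ker R(H,\hat p,\hat q)$ directly, writing an infinitesimal motion of $H$ as $(\dot p_0,\dot p_{\mathrm{old}})$. The lower block forces $R(G,p,q)\dot p_{\mathrm{old}}=0$, so $\dot p_{\mathrm{old}}$ is an infinitesimal motion of $(G,p,q)$, while the upper block forces $A\dot p_0=-B\dot p_{\mathrm{old}}$. For the ``if'' direction, if $(G,p,q)$ is rigid and $\rank A=d$, then $\dot p_{\mathrm{old}}=0$, so $A\dot p_0=0$, and invertibility of $A$ gives $\dot p_0=0$; thus $(H,\hat p,\hat q)$ is rigid. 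For the ``only if'' direction I argue by contrapositive: if $\rank A<d$, then any nonzero $\dot p_0\in\ker A$ together with $\dot p_{\mathrm{old}}=0$ is a nontrivial motion of $H$; and if $\rank A=d$ but $(G,p,q)$ is not rigid, I take a nonzero $\dot p_{\mathrm{old}}\in\ker R(G,p,q)$ and solve $A\dot p_0=-B\dot p_{\mathrm{old}}$, which is possible since $A$ is invertible, again producing a nontrivial motion of $H$. In both cases $(H,\hat p,\hat q)$ fails to be rigid, which is what is required.

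The one point needing care, and the only genuine obstacle, is the temptation to invoke the block-triangular identity $\rank R(H,\hat p,\hat q)=\rank A+\rank R(G,p,q)$ unconditionally. This identity does hold when $A$ is invertible (one can then clear $B$ by adding suitable combinations of the $v_0$-columns to the old columns), and in that case it immediately gives $\rank R(H,\hat p,\hat q)=d+\rank R(G,p,q)$ and hence the equivalence, because $(H,\hat p,\hat q)$ is rigid iff its rank equals $d|V(H)|=d+d|V(G)|$. When $\rank A<d$, however, the block $B$ can raise the rank strictly above $\rank A+\rank R(G,p,q)$, so the identity fails; this is harmless for the statement, since a rank-deficient $A$ makes both sides of the claimed equivalence false, but it is precisely why I would run the kernel argument above, which is uniform in $\rank A$, rather than rely on a single rank computation.
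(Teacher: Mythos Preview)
Your proof is correct and follows essentially the same approach as the paper: both set up the block-triangular form
\(
R(H,\hat p,\hat q)=\begin{pmatrix} A & *\\ 0 & R(G,p,q)\end{pmatrix}
\),
use the rank identity $\rank R(H,\hat p,\hat q)=\rank R(G,p,q)+d$ when $A$ is invertible, and exhibit the explicit motion $(\dot p_0,0)$ with $0\neq\dot p_0\in\ker A$ when it is not. Your extra care in justifying the rank identity via column operations, and your uniform kernel argument covering the sub-case ``$A$ invertible but $(G,p,q)$ not rigid'', are welcome elaborations but not genuinely different ideas.
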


\begin{proof}
The rigidity matrix for  $(H,\hat p,\hat q)$ has the form
$$
R(H,\hat p,\hat q)=
\begin{pmatrix}
A&*\\
0&R(G,p,q)
\end{pmatrix}
$$
where
\[
A=\begin{pmatrix}
\hat p_0-\hat p_1\\
\vdots\\
\hat p_0-\hat p_t\\
\hat q_(e_{t+1})\\
\vdots\\
\hat q_(e_{d})
\end{pmatrix}.\]

We first consider the case when
$S=\{\hat p_0-\hat p_1,\hat p_0-\hat p_2,\ldots,\hat p_0-\hat p_t,\hat q(e_{t+1}),\ldots,\hat q(e_d)\}$ is
linearly independent. Then the rows of $A$ are linearly independent  and $\rank R(H,\hat p,\hat q)=\rank R(G,p,q)+\rank A=\rank R(G,p,q)+d$. Hence $(H,\hat p,\hat q)$ is infinitesimally rigid if and only if $(G,p,q)$ is infinitesimally rigid.

It remains to consider the case when $S$ is not
linearly independent. In this case $\dim \langle S\rangle\leq d-1$ and we may define an infinitesimal motion $\dot p$ of $(H,\hat p,\hat q)$ by choosing $\dot p(v_0)$ to be any nonzero vector in $\langle S\rangle^\perp$ and putting $\dot p(v)=0$ for all $v\neq v_0$. Hence $(H,\hat p,\hat q)$ is not infinitesimally rigid.
\end{proof}

Given a graph $G=(V,E,L)$ and $q:L\to \R^d$, let $W_v=\langle q(e)\,:\,e\in L(v)\rangle$ for all $v\in V$. We say that $q$ is {\em line-admissible on $G$} if  $\dim W_v\geq d-1$ for all $v\in V$, and $\dim \langle q(L)\rangle=d$.
A {\em cycle of length $k$}  is a connected graph with $k$ edges in which each vertex has degree two.
%If $G$ is a cycle,  we say that {\em $q$ is admissible on $G$} if either
%\begin{itemize}
%\item $V=\{v\}$ and $\dim W_v=d$, or
%\item $|V|\geq 2$,  $\dim W_v=d-1$ for all $v\in V$, and $W_u\neq W_v$ for some $u,v\in V$.
%\end{itemize}

\begin{lem}\label{lem:cyc} Let $d\geq 2$ be an integer, $C$ be a cycle which is either a loop or has length at least three, $C^\dlm=(V,E,L)$ and $q:L\to \R^d$. Then $(C^\dlm,p,q)$ is infinitesimally rigid for some $p:V\to \R^d$ if and only if $q$ is line-admissible on $C^\dlm$.
\end{lem}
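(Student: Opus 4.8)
The plan is to exploit the fact that $R(C^{\dlm},p,q)$ is a \emph{square} matrix: writing $k=|V(C)|$, we have $|E|+|L|=k+k(d-1)=kd=d|V|$, so infinitesimal rigidity is equivalent to nonsingularity of $R(C^{\dlm},p,q)$. The loop case $k=1$ is immediate: there are no edges, the rigidity matrix consists of the $d$ rows $q(e)$, $e\in L(v)$, and rigidity holds (for any, hence some, $p$) precisely when these span $\R^d$, which is exactly line-admissibility. So I would assume from now on that $C$ is a simple cycle $v_1v_2\cdots v_kv_1$ with $k\geq 3$; then each $v_i$ carries exactly the $d-1$ loops of $C^{\dlm}$, so $\dim W_{v_i}\leq d-1$.

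For necessity I would argue contrapositively. If $\dim W_{v_j}\leq d-2$ for some $j$, then the $d-1$ loop rows at $v_j$ span only $W_{v_j}$ and so are linearly dependent, making $R(C^{\dlm},p,q)$ singular for \emph{every} $p$. If instead $\dim\langle q(L)\rangle\leq d-1$, pick $0\neq w\perp\langle q(L)\rangle$; then $\dot p\equiv w$ satisfies every loop equation (as $w\perp q(e)$) and every edge equation (as $\dot p_i-\dot p_j=0$), giving a nontrivial motion for every $p$. Hence rigidity for some $p$ forces both line-admissibility conditions.

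For sufficiency, assume $q$ is line-admissible. Then each $\dim W_{v_i}=d-1$, so $W_{v_i}^\perp=\langle n_i\rangle$ is a line; fix a nonzero normal $n_i$. The loop equations force any infinitesimal motion to have $\dot p(v_i)=\lambda_in_i$, and substituting into the edge equation of $v_iv_{i+1}$ gives $a_i\lambda_i-b_i\lambda_{i+1}=0$, where $a_i=(p_i-p_{i+1})\cdot n_i$ and $b_i=(p_i-p_{i+1})\cdot n_{i+1}$ (indices mod $k$). Thus the motion space is isomorphic to the kernel of the cyclic bidiagonal $k\times k$ matrix $M$ with diagonal entries $a_i$ and cyclic superdiagonal entries $-b_i$; expanding the determinant (only the identity and the full $k$-cycle contribute) yields $\det M=\prod_{i=1}^k a_i-\prod_{i=1}^k b_i$. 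So it suffices to produce $p$ with $\prod a_i\neq\prod b_i$, i.e.\ to show the polynomial $P(p)=\prod_i a_i-\prod_i b_i$ in the $kd$ coordinates of $p$ is not identically zero.

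The crux, and the step I expect to be the main obstacle, is proving $P\not\equiv 0$, and this is exactly where the global condition $\dim\langle q(L)\rangle=d$ enters. I would argue by unique factorisation in $\R[p_1,\ldots,p_k]$. Each factor $L_i:=(p_i-p_{i+1})\cdot n_i$ and $R_i:=(p_i-p_{i+1})\cdot n_{i+1}$ is a nonzero linear form whose variable support is exactly $\{p_i,p_{i+1}\}$; since $k\geq 3$ the $k$ edges of $C$ have pairwise distinct vertex-pairs, so $L_i$ can be a scalar multiple of $R_j$ only when $j=i$. If $P\equiv 0$, then $\prod L_i=\prod R_i$, and matching irreducible factors forces $L_i=c_iR_i$ for each $i$, whence $n_i$ is parallel to $n_{i+1}$; running around the cycle makes all the hyperplanes $W_{v_i}$ coincide, so $\dim\langle q(L)\rangle=d-1$, contradicting line-admissibility. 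Therefore $P\not\equiv 0$, some $p$ makes $R(C^{\dlm},p,q)$ nonsingular, and $(C^{\dlm},p,q)$ is infinitesimally rigid.
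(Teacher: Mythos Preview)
Your proof is correct. The necessity argument is essentially the same as the paper's. For sufficiency, however, you take a genuinely different route: the paper argues inductively, choosing an edge $uv$ with $W_u\neq W_v$, deleting $u$, transplanting one of its loop directions onto $v$ so that $\dim W'_v=d$, then rebuilding the cycle via repeated $0$-extensions (Lemma~\ref{lem:02ext}) and a final $1$-loop extension in the style of Lemma~\ref{lem:extension}. You instead exploit squareness of $R(C^{\dlm},p,q)$ directly: after the loop rows pin each $\dot p_i$ to the line $\langle n_i\rangle$, the edge rows become a cyclic bidiagonal $k\times k$ system whose determinant $\prod_i (p_i-p_{i+1})\!\cdot n_i - \prod_i (p_i-p_{i+1})\!\cdot n_{i+1}$ you show is a nonzero polynomial by matching irreducible linear factors via their vertex supports. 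The paper's approach recycles the extension machinery already in place and keeps the argument uniform with the rest of the paper; yours is self-contained (it needs neither extension lemma), yields an explicit algebraic certificate for rigidity, and makes transparent exactly where the global condition $\dim\langle q(L)\rangle=d$ is used, namely to prevent all the $n_i$ from being parallel.
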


\begin{proof}
We first prove necessity.
Suppose that $q$ is not line-admissible on $C^\dlm$. If $V=\{v\}$ then $\dim W_v\leq d-1$ and any nonzero vector $\dot p_v\in W_v^\perp$ will be an infinitesimal motion of $(C^\dlm,p,q)$ for all $p$. Hence we may assume that $|V|\geq 3$. If $\dim W_v\leq d-2$ for some $v\in V$ then the rows of $R(C^\dlm,p,q)$ indexed by $L(v)$ will be dependent and hence the rank of $R(C^\dlm,p,q)$ will be less than $|E|+|L|=d|V|$ for all $p$.
Hence we may assume that $\dim W_v= d-1$ for all $v\in V$ and that $\dim \langle q(L)\rangle=d-1$. Then $W_u= W_v$ for all $u,v\in V$.
Let $W_v=W$, $t$ be a nonzero vector in $W^\perp$, and $\dot p:V\to \R^d$ be defined by $\dot p(v)=t$ for all $v\in V$. Then $\dot p$ will be a nontrivial infinitesimal motion of $(C^\dlm,p,q)$ for all $p$.

We next prove sufficiency. Suppose that $q$ is line-admissible on $C^\dlm$. If $V=\{v\}$ then $\dim W_v= d=\rank R(C^\dlm,p,q)$ for all $p$. Hence we may assume that $|V|\geq 3$. Since $\dim \langle q(L)\rangle=d$, we may choose $u,v,w\in V$ such that $uv,uw\in E$ and $W_u\neq W_v$. Let $G=(V-u,E',L')$ be the graph obtained from $C^\dlm-u$ by adding a loop $e_0$ at $v$. Define $q':L'\to \R^d$ by putting $q'(e)=q(e)$ for all $e\in L\cap L'$ and $q'(e_0)=q(e)$ for some $e\in L(u)$ with $q(e)\not\in W_v$. Then $\dim W'_v=d$ so the subgraph $H$ of $G$ induced by $v$ has an infinitesimally rigid realisation $(H,p'',q'|_H)$ for any $p''(v)\in \R^d$. We can now use Lemma \ref{lem:02ext} recursively to construct $p':V-u\to \R^d$ such that $(G,p',q')$ is infinitesimally rigid. If necessary we may perturb this realisation slightly so that $p'(v)+q'(e_0))-p'(w)\not\in W_u$.  Finally, we construct
an infinitesimally rigid realisation $(C^\dlm,p,q)$ from $(G,p',q')$ by using the proof technique of Lemma \ref{lem:extension}. More precisely we construct $C^\dlm$ from $G$ by performing a $d$-dimensional $1$-loop extension operation at the loop $e_0$ and choose $p:V\to \R^d$ such that $p|_{V-u}=p'$ and $p(u)=p'(v)+q'(e_0)$.
\end{proof}

\begin{thm}\label{thm:lingen} Let  $d\geq 2$ be an integer, $G$ be a graph, $G^\dlm=(V,E,L)$ and $q:L\to \R^d$ such that $\dim W_v\geq d-1$ for all $v\in V$. Then $(G^\dlm,p,q)$ is infinitesimally rigid for some $p:V\to \R^d$ if and only if every connected component of $G$ has a cycle $C$ of length not equal to two such that $q$ is line-admissible on $C^\dlm$.
\end{thm}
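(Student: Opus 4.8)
The plan is to reduce everything to the case of connected $G$. Since the edges and loops of $G^\dlm$ each lie within a single connected component of $G$, the matrix $R(G^\dlm,p,q)$ is block diagonal with one block per component, so $(G^\dlm,p,q)$ is infinitesimally rigid for some $p$ if and only if each component admits such a realisation; as both the hypothesis and the conclusion are phrased componentwise, it suffices to prove the statement for connected $G$, namely that $(G^\dlm,p,q)$ is infinitesimally rigid for some $p$ if and only if $G$ has a cycle $C$ of length not equal to two with $q$ line-admissible on $C^\dlm$.

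For sufficiency, suppose connected $G$ has such a cycle $C$. By Lemma \ref{lem:cyc} there is a realisation of $C^\dlm$ that is infinitesimally rigid. I would then grow this to a spanning rigid subgraph of $G^\dlm$ by processing the vertices of $V\setminus V(C)$ in the order given by a spanning tree of $G$ rooted at $C$: each new vertex $w$ is attached by a single tree edge together with $d-1$ loops chosen to be linearly independent (possible since $\dim W_w\geq d-1$). This is a $d$-dimensional $0$-extension with $t=1$, and by Lemma \ref{lem:02ext} it preserves infinitesimal rigidity provided $p(w)$ is chosen so that the edge direction lies outside the span of the $d-1$ loop vectors, which is possible as that span has dimension $d-1$. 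The resulting spanning subgraph of $G^\dlm$ is infinitesimally rigid, and reinstating the remaining edges and loops cannot decrease the rank, so $(G^\dlm,p,q)$ is infinitesimally rigid.

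For necessity, suppose $(G^\dlm,p_0,q)$ is infinitesimally rigid for some $p_0$. Since having rank $d|V|$ is a Zariski-open condition on $p$ while line-admissibility depends only on $q$, I may assume $p$ is generic over $\bQ(q)$. If $\dim W_v=d$ for some $v$ then, since the $d-1$ added loops span at most a $(d-1)$-dimensional space, $G$ must carry a loop $e$ at $v$ with $q(e)$ outside that span; the single loop $e$ is then a line-admissible cycle of length one and we are done. Otherwise $\dim W_v=d-1$ for every $v$, so every infinitesimal motion confines $\dot p(v)$ to the line $\ell_v=W_v^\perp$. Writing $\dot p(v)=s_v u_v$ with $u_v$ spanning $\ell_v$ reduces the edge equations to a scalar weighted-incidence system $Ms=0$, and rigidity becomes equivalent to $\rank M=|V|$; here loops contribute nothing since each has its $q$-value in $W_v$. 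Genericity of $p$ makes every entry of $M$ nonzero, so a spanning tree of $G$ already gives rank $|V|-1$, and the extra rank must come from a non-tree edge whose fundamental cycle $C$ is unbalanced (the product of the edge gains around it is not $1$). A length-two cycle yields two identical rows of $M$ and is therefore balanced, so $C$ has length not equal to two, and since the associated cyclic submatrix $M_C$ is nonsingular, $(C^\dlm,p|_C,q)$ is infinitesimally rigid; Lemma \ref{lem:cyc} then shows that $q$ is line-admissible on $C^\dlm$.

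The main obstacle is the necessity direction: justifying the reduction of the full rigidity matrix to the scalar incidence matrix $M$ and extracting from $\rank M=|V|$ an unbalanced, hence line-admissible, cycle of length not equal to two. The enabling step is the passage to $p$ generic over $\bQ(q)$, which both removes degenerate zero entries from $M$ (so the spanning-tree versus unbalanced-cycle dichotomy applies cleanly) and is legitimate precisely because the desired conclusion constrains $q$ alone. Some care is also needed to verify that parallel edges always produce balanced cycles, which is exactly what forces the certifying cycle to avoid length two.
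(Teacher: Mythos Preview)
Your sufficiency argument is essentially the paper's: both obtain a rigid realisation of $C^{[d-1]}$ from Lemma~\ref{lem:cyc} and then reach all of $G^{[d-1]}$ by a sequence of $0$-extensions using Lemma~\ref{lem:02ext}, with the remaining edges and loops only able to raise the rank.

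For necessity your route is genuinely different from the paper's. The paper argues combinatorially: from a rigid realisation it extracts a spanning subgraph $H$ of $G$ with the rows of $R(H^{[d-1]},p,q)$ independent of rank $d|V|$ (so $H$ is $(1,0)$-tight), then strips off degree-one vertices of $H$ via Lemma~\ref{lem:02ext} until every component is a cycle, and finally invokes the necessity half of Lemma~\ref{lem:cyc}. You instead pass to $p$ generic over $\bQ(q)$, parametrise motions along the lines $\ell_v=W_v^\perp$, and reduce the edge equations to a one-dimensional weighted incidence system $Ms=0$; rigidity becomes $\rank M=|V|$, and you extract an unbalanced fundamental cycle, which must have length $\geq 3$ because parallel edges give identical rows of $M$. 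This gain-graph viewpoint is a valid alternative and makes the length-$\neq 2$ condition emerge very naturally, whereas in the paper's argument it comes out because the tight subgraph $H$ is automatically looped simple. The paper's approach, on the other hand, avoids any genericity argument for $p$ and produces the cycle by purely combinatorial peeling.

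One step in your argument deserves a caveat. In concluding ``$M_C$ nonsingular $\Rightarrow$ $(C^{[d-1]},p|_C,q)$ rigid'' you need the $d-1$ \emph{added} loops at each $v\in V(C)$ already to span $W_v$, so that the loop constraints of $C^{[d-1]}$ alone force $\dot p(v)\in\ell_v$; and in Case~1 you need those added loops to span a $(d-1)$-plane for the single original loop $e$ to complete them to a basis of $\R^d$. The paper's proof rests on the same tacit assumption at the step ``we can choose a subgraph $H$ of $G$ such that the rows of $R(H^{[d-1]},p,q|_{H^{[d-1]}})$ are linearly independent'', so this is not a defect of your approach relative to the paper but a shared degeneracy that both arguments leave implicit.
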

\begin{proof}
We first prove necessity.
Suppose that $G^{[d-1]}$ can be realised as an infinitesimally rigid line-constrained framework $(G^\dlm,p,q)$ in $\R^d$. Then $\rank R(G^\dlm,p,q)=d|V|$. Since  $\dim W_v\geq d-1$ for all $v\in V$, we can choose a  subgraph $H=(V,E',L')$ of $G$ such that the rows of $R(H^\dlm,p,q|_{H^\dlm})$ are linearly independent and $\rank R(H^\dlm,p,q|_{H^\dlm})=d|V|$. Then $H$ is a looped simple graph of minimum degree at least one and
$|E'|+|L'|=|V|$. If $H$ has a vertex $v$ of degree one, then we can apply Lemma \ref{lem:02ext} to
$(H^\dlm,p,q|_{H^\dlm})$ to deduce that $((H-v)^\dlm,p|_{V-v},q|_{(H-v)^\dlm})$ is infinitesimally rigid. We may then apply induction to $H-v$ to deduce that each component of $H$ has a cycle $C$ of length not equal to two such that $q$ is line-admissible on $C^\dlm$. It remains to consider the case when every vertex of $H$ has degree two. Then each component of $H$ is a cycle (which necessarily has length not equal to two since $H$ is looped simple). In this case we may use Lemma \ref{lem:cyc} to deduce that  $q$ is line-admissible on $C^\dlm$ for each component $C$ of $H$. Since every connected component of $H$ is contained in a component of $G$, each component of $G$ has a cycle $C$ of length not equal to two such that $q$ is line-admissible on $C^\dlm$.

We next prove sufficiency. Suppose that each connected component of $G$ contains a cycle $C$ of length not equal to two such that $q$ is line-admissible on $C^\dlm$. We may assume inductively that $G$ is connected and that $C$ is the unique cycle in $G$. We may now use Lemma \ref{lem:02ext} to reduce to the case when $G=C$. Lemma \ref{lem:cyc} now implies that $(G^\dlm,p,q)$ is infinitesimally rigid for some $p$.
\end{proof}

 Note that when $d=1$, a graph $G=(V,E,L)$ with a given map $q:L\to \R$ has an infinitesimally rigid realisation $(G,p,q)$ in $\R$ if and only if every connected component of $G$ has a loop $e$ with $q(e)\neq 0$. Thus Lemma \ref{lem:cyc} and Theorem \ref{thm:lingen} remain true when $d=1$ if we adopt the convention that $\dim \emptyset=0$.

\section{Body-bar frameworks}
Suppose we are given a graph $G=(V,E,L)$. We can realise $G$ as a {\em linearly-point-constrained body-bar framework in $\R^d$} by representing each vertex $v$ by a $d$-dimensional rigid body $B_v$ in $\R^d$, each edge $uv$ as a distance constraint between two points in $B_u$ and $B_v$, respectively, and each loop $e=vv$ as a constraint which restricts the motion of some point of $B_v$ to be orthogonal to a given vector $q(e)\in \R^d$.
A realisation of $G$ as a {\em linearly-body-constrained body-bar framework} is defined similarly but in this case each loop $e=vv$ represents a constraint which restricts the motion of the whole body $B_v$ to be orthogonal to a given vector $q(e)\in \R^d$.

Katoh and Tanigawa \cite{KatTan} consider closely related realisations of $G$ as a body-bar framework with either bar or pin boundary in $\R^d$. In the first case each loop $e=vv$ constrains the infinitesimal motion of a point of the body $B_v$ to be a rotation about a fixed point $P_e$. In the second case the loop constrains the infinitesimal motion of the whole body $B_v$ to be a rotation about $P_e$. They characterise when $G$ can be realised as an infinitesimally rigid body-bar framework with either a bar or pin boundary for any given set of boundary bar-directions or  pins, \cite[Corollary 6.2, Theorem 6.3]{KatTan}. Their approach uses projective geometry and considers an embedding of $\R^d$ into projective space $\bP^d$. By choosing the projective points corresponding to each point $P_e$ to lie on the hyperplane at infinity in $\bP^d$ in the case of a pin boundary, their results immediately imply the following characterisations of graphs which can be realised as infinitesimally rigid  linearly-constrained body-bar frameworks.

\begin{thm}
Let $G=(V,E,L)$ be a graph and $q:L\to \R^d\sm \{0\}$. Then $(G,q)$ can be realised as an
infinitesimally rigid  linearly-point-constrained body-bar framework in
$\R^d$ if and only if
$$\delta_G(\P) \geq  {{d+1}\choose{2}}|\P|-\sum_{X\in \P}\dim\langle q(e)^*\,:\,e\in L(X)\rangle$$
for all partitions $\P$ of $V$, where $\delta_G(\P)$ is the number of edges in $E$ joining different sets in $\P$, $L(X)$ is the set of all loops in $L$ incident to vertices of $X$, and $q(e)^*\in \R^{{}^{{d+1}\choose{2}}}$ is the vector of Pl\"ucker coordinates of a line segment in $\R^d$ with direction $q(e)$.
\end{thm}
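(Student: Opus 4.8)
The plan is to derive this characterisation directly from the Katoh--Tanigawa result on body-bar frameworks with a pin boundary \cite[Theorem 6.3]{KatTan}, using the projective reformulation sketched in the surrounding text. First I would recall the projective setup: a $d$-dimensional body-bar framework lives naturally in $\bP^d$, where an infinitesimal motion of each body is an element of the Lie algebra of the projective isometry group, encoded via Pl\"ucker coordinates in $\R^{\binom{d+1}{2}}$. In the Katoh--Tanigawa pin-boundary model, a loop $e$ at a body $B_v$ constrains the infinitesimal motion of the whole body to be a rotation about a fixed point $P_e$; algebraically this contributes a single independent constraint whose coefficient vector is determined by $P_e$. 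The key observation is that a linear constraint on the body $B_v$ forcing its motion to be orthogonal to $q(e)\in\R^d$ corresponds exactly to a pin $P_e$ placed on the hyperplane at infinity in the direction dual to $q(e)$; the Pl\"ucker coordinate vector $q(e)^*$ of a line segment with direction $q(e)$ is precisely the coefficient vector of this constraint.

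Next I would make the dictionary between the two rank counts precise. Each body in $\R^d$ has $\binom{d+1}{2}$ infinitesimal degrees of freedom (the dimension of the space of infinitesimal isometries of $\R^d$), so a realisation on $|V|$ bodies has a rigidity matrix with $\binom{d+1}{2}|V|$ columns, and the framework is infinitesimally rigid iff the rows (one per edge, one per loop) have rank equal to this number. The edge rows behave exactly as in the Katoh--Tanigawa body-bar count, contributing $\delta_G(\P)$ across any partition $\P$. The loop rows at the vertices of a block $X$ span a subspace of the constraint space whose dimension is $\dim\langle q(e)^*\,:\,e\in L(X)\rangle$; this is the quantity subtracted on the right-hand side. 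Thus the linearly-constrained rigidity matroid is obtained from the Katoh--Tanigawa pin-boundary matroid by specialising the pins to lie at infinity, and the submodular count governing independence transfers verbatim.

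With this dictionary, I would argue that the partition inequality
$$\delta_G(\P) \geq \binom{d+1}{2}|\P|-\sum_{X\in \P}\dim\langle q(e)^*\,:\,e\in L(X)\rangle$$
for all partitions $\P$ of $V$ is necessary and sufficient for the existence of an infinitesimally rigid realisation. Necessity follows because, for any realisation, contracting each block $X$ of $\P$ to a single body and counting degrees of freedom yields this inequality as a rank obstruction. Sufficiency is where the Katoh--Tanigawa theorem does the real work: their matroid union / count-matroid characterisation guarantees that the generic realisation of the abstract count is achievable, and the specialisation of the pins to the hyperplane at infinity preserves genericity of the remaining free data, so a rigid realisation exists.

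The main obstacle I anticipate is making the projective specialisation rigorous: one must check that sending each pin $P_e$ to the point at infinity in the direction dual to $q(e)$ does not destroy the generic rank predicted by the count, i.e.\ that the constraint vectors $q(e)^*$ are exactly the limits of the pin-constraint vectors and that no degeneracy is introduced by this limit. Concretely, the delicate point is verifying that the dimension $\dim\langle q(e)^*\,:\,e\in L(X)\rangle$ of the span of the at-infinity pin constraints at a block equals the corresponding dimension in the Katoh--Tanigawa count for generic finite pins along the prescribed directions; once this equality of spans is established, the transfer of both the necessity inequality and the sufficiency construction is routine. I would handle this by writing the Pl\"ucker coordinates of the pin constraint explicitly and taking the limit as the pin recedes to infinity along the direction $q(e)$, confirming that the leading-order term is a nonzero scalar multiple of $q(e)^*$.
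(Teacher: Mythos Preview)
Your proposal has a mismatch between the Katoh--Tanigawa result you invoke and the theorem you are proving. The statement concerns linearly-\emph{point}-constrained body-bar frameworks: each loop constrains the velocity of a single chosen point of the body $B_v$ to be orthogonal to $q(e)$. This is precisely Katoh--Tanigawa's \emph{bar} boundary model (their Corollary~6.2), in which each boundary loop is a bar with a prescribed direction; identifying that direction with $q(e)$ gives the linearly-point-constrained framework directly, and the Pl\"ucker-coordinate partition count in the statement is the count appearing in Corollary~6.2. The paper's proof is exactly this identification, with no limiting argument required.

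The pin boundary model (Theorem~6.3) that you cite is a different object: a pin forces the \emph{entire} body to rotate about $P_e$, which imposes $d$ independent linear constraints on the $\binom{d+1}{2}$-dimensional motion space of the body, not one. Your assertion that a pin ``contributes a single independent constraint whose coefficient vector is determined by $P_e$'' is therefore false for $d\geq 2$, and the dictionary you build between pin constraints and single Pl\"ucker vectors $q(e)^*$ collapses. Sending the pins to the hyperplane at infinity is indeed a useful specialisation, but it produces the linearly-\emph{body}-constrained model and hence the \emph{next} theorem in the paper, whose right-hand side involves the dimension of the affine span of the vectors $q(e)$ rather than the linear span of their Pl\"ucker coordinates. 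In short, you have swapped the two Katoh--Tanigawa results: the point-at-infinity limit you outline is the argument for the other theorem, while the present one follows from Corollary~6.2 without any limit.
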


\begin{thm} Let $G=(V,E,L)$ be a graph and $q:L\to \R^d\sm \{0\}$. Then $(G,q)$ can be realised as an infinitesimally rigid linearly-body-constrained body-bar framework in $\R^d$ if and only if
$$\delta_G(\P) \geq  {{d+1}\choose{2}}|\P| -\sum_{X\in \P, L(X)\neq \emptyset}\sum_{i=1}^{d_X+1}(d-i+1)$$
for all partitions $\P$ of $V$, where
%$\delta_G(\P)$ is the number of edges in $E$ joining different sets in $\P$, $L_X$ is the set of all loops in $L$ incident to vertices of $X$, $f(X)=\sum_{i=1}^{d_X+1}(d-i+1)$, and
$d_X$ is the dimension of the affine
span of $\{q(e)\,:\,e\in L(X)\}$.
\end{thm}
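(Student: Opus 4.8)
The plan is to derive the theorem directly from the pin-boundary case of Katoh and Tanigawa's characterisation \cite{KatTan}, by placing every pin on the hyperplane at infinity of $\bP^d$. First I would recall the projective model underlying \cite{KatTan} and Tay's theorem: an infinitesimal motion of a $d$-dimensional body is a \emph{screw} $\omega$, a skew-symmetric $(d+1)\times(d+1)$ matrix $\omega=\left(\begin{smallmatrix} S & c \\ -c^{T} & 0\end{smallmatrix}\right)$ with $S$ skew and $c\in\R^d$, whose affine velocity at $x\in\R^d$ is $Sx+c$; a bar contributes one linear condition on the relative screw of its ends, and a \emph{pin} at a projective point $\hat P$ contributes the condition that the screw fix $\hat P$, which (since $\omega$ is skew) is exactly $\omega\hat P=0$.

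Second, I would realise each loop $e=vv$ by the pin $\hat P_e=(q(e),0)$, the point at infinity in the direction $q(e)$. Then $\omega\hat P_e=(Sq(e),\,-c\cdot q(e))$, so $\omega\hat P_e=0$ is equivalent to $Sq(e)=0$ together with $c\cdot q(e)=0$; that is, every point of the body $B_v$ moves orthogonally to $q(e)$. This is precisely the linearly-body-constrained condition, so the two families of frameworks coincide and the characterisation of \cite{KatTan} applies verbatim.

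The crux is the third step: computing the ground freedom absorbed by the loops at a single body $X$. Writing $U=\langle \hat P_e : e\in L(X)\rangle\subseteq\R^{d+1}$ and $r=\dim U$, the residual screws form the space of bivectors annihilating $U$, which has dimension $\binom{d+1-r}{2}$; hence the loops at $X$ impose
\[
\binom{d+1}{2}-\binom{d+1-r}{2}=\sum_{i=1}^{r}(d-i+1)
\]
independent ground constraints (and none when $L(X)=\emptyset$, where $r=0$). Because every $\hat P_e$ lies on the hyperplane at infinity, $U$ is spanned by the directions $q(e)$ sitting in that hyperplane, so $r-1$ is the projective dimension of the span of $\{q(e):e\in L(X)\}$; in the notation of the statement this reads $r=d_X+1$, and the block contribution becomes $\sum_{i=1}^{d_X+1}(d-i+1)$.

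Finally I would assemble the partition count. The pin-boundary theorem of \cite{KatTan} provides, for every partition $\P$ of $V$, a count condition of the form
\[
\delta_G(\P)\ \geq\ \binom{d+1}{2}|\P|-\sum_{X\in\P}\Big(\binom{d+1}{2}-\dim(\text{residual freedom of }X)\Big),
\]
the bracketed term being the ground constraints absorbed inside the block $X$. Substituting the block contribution computed above yields exactly the stated inequality, and both the sufficiency and necessity directions are inherited directly from \cite{KatTan}. The main obstacle is this third step: pinning down the correct \emph{matroidal} rank of the pin constraints at a body and identifying $r$ with $d_X+1$. The delicacy is that parallel loop-directions determine the same point at infinity and hence redundant constraints, so $d_X$ must be read as the projective dimension of the span of the directions $q(e)$ in the hyperplane at infinity rather than as a naive affine span in $\R^d$; once this bookkeeping is fixed, the binomial identity $\binom{d+1}{2}-\binom{d-d_X}{2}=\sum_{i=1}^{d_X+1}(d-i+1)$ closes the argument.
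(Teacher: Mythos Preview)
Your proposal is correct and follows exactly the approach indicated in the paper: the paper gives no detailed proof of this theorem, merely stating that it follows immediately from Katoh and Tanigawa's pin-boundary characterisation \cite[Theorem~6.3]{KatTan} by choosing each pin $P_e$ to be the point at infinity in the direction $q(e)$. Your elaboration of the screw-theoretic bookkeeping---the verification that $\omega\hat P_e=0$ is equivalent to the body moving orthogonally to $q(e)$, the dimension count $\binom{d+1}{2}-\binom{d+1-r}{2}=\sum_{i=1}^{r}(d-i+1)$, and the identification $r=d_X+1$ together with your caveat that $d_X$ must be read as the projective dimension of the directions---is precisely the content needed to turn that citation into a proof.
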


%Suppose now we are given a looped multigraph $G=(V,E,L)$. We can realise $G$
%as a {\em pointwise linearly-constrained body-bar framework in $\R^d$} by
%representing each vertex $v$ by a $d$-dimensional rigid body $B_v\in \R^d$,
%each edge $uv$ as a distance constraint between two points in $B_u$ and $B_v$,
%respectively, and each loop $e=vv$ as a constraint which restricts the motion
%of a point $P_v$ in $B_v$ to be orthogonal to a given vector $q_v$ in $\R^d$.
%Since such a loop is equivalent to adding a bar through $P_v$
%in the direction of $q_v$ to a fixed external environment, we can use
%Corollary 6.2 in \cite{KatTan} and obtain the following result.

\section{Concluding remarks}
\label{sec:further}

\noindent 1. Theorem \ref{thm:main} gives rise to an efficient algorithm for testing whether a graph can be realised as an infinitesimally rigid framework on a type 0, $t$-dimensional variety in $\R^d$ when
$d\geq \max\{2t,t(t-1)\}$.
%$d$ is sufficiently large with respect to $t$.
Details may be found in \cite{B&J,L&S}.
%The precise bound on $d$ given by Theorem \ref{thm:main} is .
As discussed in the introduction, the bound $d\geq 2t$ is tight but the bound $d\geq t(t-1)$ may be far from best possible.\\

\noindent 2. The proof of Theorem \ref{thm:ST} in \cite{ST} is a direct proof using a related `frame matroid'. We briefly describe how our inductive techniques can be adapted to give an alternative proof of their result. Suppose $G=(V,E,L)$ is a  graph which satisfies the hypotheses of Theorem \ref{thm:ST} i.e.\  $|E|+|L|=2|V|$,  $|F|\leq 2|V_F|$ for all $F\subseteq E\cup L$, and, when $\emptyset\neq F\subseteq E$, $|F|\leq 2|V_F|-3$. Then $G$ contains a vertex $v$ which is incident to either 2 or 3 edges and loops. In the first case, it is easy to see that $G-v=(V',E',L')$ also satisfies the hypotheses of Theorem \ref{thm:ST}.
Hence we may use induction and Lemma \ref{lem:02ext} to deduce the rigidity of $G$.
 In the latter case, either $v$ is incident with at least one loop and we can use Lemmas \ref{lem:reduction} and \ref{lem:extension} to show that $G$ is rigid, or $v$ has three incident edges. To reduce such a vertex we may use the
 fact that the function $f:2^{E\cup L}\to \bZ$ given by $f(F)=2|V_F|-3$ if $F\subseteq E$ and $f(F)=2|V_F|$ if $F\not \subseteq E$, is nonnegative on nonempty sets, nondecreasing and crossing submodular, and hence induces a matroid on $E\cup L$, see for example \cite{JN,ST}. Let $r(G)$ denote the rank of this matroid and suppose that $N(v)=\{x,y,z\}$. Let $G'$ be formed from $G$ by deleting $v$ (and its incident edges) and adding the edges $xy,xz,yz$. Suppose that
 %none of the reductions adding $xy$, adding $xz$ or adding $yz$ to $G-v$) preserve independence in this matroid. Then
 $r(G')=r(G)-3$. Let $G''$ be formed from $G$ by adding $xy,xz,yz$. Then $r(G'')\leq r(G')+2=r(G)-1$ since the edge set of $K_4$ is dependent in the matroid. This is a contradiction since $G$ is a subgraph of $G''$. Hence $G-v+e$ satisfies the hypotheses of Theorem \ref{thm:ST} for some $e\in \{xy,xz,yz\}$. We can now  complete the proof by choosing a generic infinitesimally rigid realisation of $G-v+e$ and placing $v$ on the line joining the end-points of $e$.\\

\noindent 3. Theorem \ref{thm:ST} was extended by Katoh and Tanigawa  \cite[Theorem 7.6]{KatTan} to allow specified directions for the linear constraints. More precisely they determine when a given  graph $G=(V,E,L)$ and map $q:L\to \R^2$ can be realised as an infinitesimally rigid linearly-constrained framework $(G,p,q)$ in $\R^2$. It is an open problem to decide if this result can be extended to plane-constrained frameworks in $\R^d$ for $d\geq 3$. In particular we offer

\begin{con} Let $G$ be a graph and $S$ be a fixed type $0$ smooth surface in $\R^d$. Then $G$ has an infinitesimally rigid realisation on $S$ if and only $G$ has a  $(2,0)$-tight looped simple spanning subgraph, which contains no copy of $K_5$ when $d=3$.
\end{con}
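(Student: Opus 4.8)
The plan is to prove the two implications separately, viewing a realisation on $S$ as the special plane-constrained framework $(G^{[d-2]},p,q)$ in which each $p(v)$ lies on $S$ and the $d-2$ loop-directions $q$ at $v$ form a basis of the normal space $N_{p(v)}S$. The one essential difference from Theorems~\ref{thm:main} and \ref{thm:plane} is that the constraint directions $q$ are no longer free parameters but are slaved to the positions through the Gauss map of $S$; only the points $p(v)$, ranging over $S$, remain at our disposal.

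Necessity should go through with only cosmetic changes. Assuming $(G^{[d-2]},p,q)$ is infinitesimally rigid for $p$ generic on $S^{|V|}$, I would argue exactly as in Lemma~\ref{lem:nec}: the $d-2$ loop rows at each vertex are independent because $S$ is smooth, so deleting dependent edge rows yields a spanning looped-simple subgraph $H$ whose rows are independent and of full rank $d|V|$, forcing $H$ to be $(2,0)$-tight. For $d=3$ one observes that the $K_5$ obstruction lives entirely in the \emph{bar-joint} submatrix $R(G-L,p)$, whose rows are the ordinary bar-joint rows in $\R^3$; since $K_5$ is generically dependent as a bar-joint framework in $\R^3$, any copy of $K_5$ in $H$ would make those edge rows dependent, so $H$ contains no $K_5$. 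This step uses only genericity of $p$ on $S$ and is independent of which type-0 surface is chosen.

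For sufficiency I would run the induction from the proof of Theorem~\ref{thm:plane}: use Lemma~\ref{lem:reduction} to peel off a vertex $v$ with $2\leq\delta(v)\leq 4$, apply the inductive hypothesis to the smaller $(2,0)$-tight, $K_5$-free graph $H$ realised on $S$, and then re-insert $v$ by a $d$-dimensional $k$-loop extension with $k=\delta(v)-2$. The new ingredient must be a \emph{surface-constrained} analogue of Lemma~\ref{lem:extension}: rather than choosing the loop-directions at $v$ freely, I must place $p_G(v)=z$ on $S$ so that the forced normal space $N_zS$ supplies the $d-2$ loop-directions, while the edge directions $\{z-p_H(v_j)\}$ stay independent from $N_zS$ and from one another. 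The analysis of Lemma~\ref{lem:extension}---that a motion forces $\dot p(v_j)\in W_j^\perp$, hence restricts to a motion of $H$, and then that $\dot p(v)=0$---carries over verbatim once such a $z$ is found, so everything reduces to a purely geometric placement problem on $S$.

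The main obstacle is precisely this placement step, and it is what keeps the statement conjectural. In Lemma~\ref{lem:extension} the general position of the spans $A_j$ and $A$ was guaranteed by genericity of $q$; here the loop-spans are the $N_zS$, which move with $z$ through the Gauss map, so one needs a transversality statement of the form: for $z$ ranging over a $2$-dimensional patch of $S$ subject to the (at most two) edge-incidence conditions, a generic such $z$ yields normal and edge directions in general position relative to the existing affine structure. For a \emph{generic} algebraic surface, or for the explicit quadrics as in Lemma~\ref{lem:4regind}, this is a routine dimension count; the difficulty is to exclude, for \emph{every} smooth type-0 surface, an algebraic coincidence between the Gauss map of $S$ and the combinatorial data. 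A second, related obstacle is the exceptional $4$-regular case when $d=3$: Lemma~\ref{lem:4regind} settled it on one explicit cylinder via a unique equilibrium stress with a nonzero loop coordinate, and one would have to show, using Theorem~\ref{thm:maxmin} together with a surface-independent stress argument, that such a stress persists on an arbitrary fixed type-0 surface. Overcoming these two transversality issues \emph{uniformly} over all type-0 surfaces is the crux, and is why I would expect a complete proof to demand genuinely new geometric input beyond the inductive machinery developed here.
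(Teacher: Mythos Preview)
The statement you are addressing is labelled \texttt{con} in the paper---it is a \emph{conjecture}, offered in the concluding remarks as an open problem, and the paper gives no proof of it. There is therefore nothing to compare your attempt against.

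That said, your proposal is appropriate in spirit: you do not claim a proof but rather sketch an approach and explicitly flag the points at which it breaks down. Your necessity argument is essentially the paper's Lemma~\ref{lem:nec} combined with the $K_5$ observation from the proof of Theorem~\ref{thm:plane}, and would go through for a generic realisation on $S$. Your identification of the two obstructions to sufficiency---the surface-constrained placement step replacing Lemma~\ref{lem:extension}, and the $4$-regular base case replacing Lemma~\ref{lem:4regind}---is exactly right, and your diagnosis that both reduce to transversality statements about the Gauss map of an \emph{arbitrary} type-0 surface is the reason the paper leaves the statement open. In short: your write-up is not a proof, you know it is not a proof, and the paper agrees.
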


\end{document}